\newtheorem{theorem}{Theorem}[section]
\newtheorem{lemma}[theorem]{Lemma}
\newtheorem{proposition}[theorem]{Proposition}
\newtheorem{corollary}[theorem]{Corollary}
\theoremstyle{definition}
\newtheorem{definition}[theorem]{Definition}
\theoremstyle{remark}
\newtheorem{remark}[theorem]{Remark}
\numberwithin{equation}{section}
\begin{document}

\title[
linear elliptic equations with $L^2$-drifts of negative divergence]{
On the contraction properties for weak solutions to linear elliptic equations with $L^2$-drifts of negative divergence}

\author{Haesung Lee }
\address{Department of Mathematics and Big Data Science, Kumoh National Institute of Technology, Gumi, Gyeongsangbuk-do 39177, Republic of Korea}
\email{fthslt@kumoh.ac.kr, \,  fthslt14@gmail.com}



\subjclass[2020]{Primary 35J15, 35J25; Secondary 31C25, 35B35}



\keywords{weak solutions,  linear elliptic equations, Dirichlet forms, resolvents, contraction properties, $L^1$-stability}

\begin{abstract}
We show the existence and uniqueness as well as boundedness of weak solutions to linear elliptic equations with $L^2$-drifts of negative divergence and singular zero-order terms which are positive. Our main target is to show the $L^r$-contraction properties of the unique weak solutions. Indeed, using the Dirichlet form theory, we construct a sub-Markovian $C_0$-resolvent of contractions and identify it to the weak solutions.  Furthermore, we derive an $L^1$-stability result through an extended version of the $L^1$-contraction property.
\end{abstract}
\maketitle

\section{Introduction}
\noindent This paper is devoted to studying the following Dirichlet boundary value problem for linear elliptic equations in divergence form on a bounded open subset $U$ of $\mathbb{R}^d$:
\begin{equation} \label{undeq}
\left\{
\begin{alignedat}{2}
-\text{div}(A \nabla u) + \langle B, \nabla u \rangle + (c +\alpha) u &=f -\text{div}F&& \quad \mbox{in $U$}\\
u &= 0 &&\quad \mbox{on $\partial U$},
\end{alignedat} \right.
\end{equation}
where $A$ is a $d \times d$ matrix of functions satisying \eqref{assump}, $B$ and $F$ are vector fields on $U$, $c$  and $f$ are real-valued functions on $U$ and $\alpha \geq 0$ is a constant. Here $u$ is called a weak solution to \eqref{undeq} if $u \in H^{1,2}_0(U)$ satisfies for any $\varphi \in C_0^{\infty}(U)$,
\begin{equation} \label{(maineq)}
\int_{U} \langle A \nabla u, \nabla \varphi \rangle+ \big( \langle B,  \nabla u \rangle +(c+\alpha) u \big)\varphi dx   = \int_{U} f \varphi  +\langle F, \nabla \varphi \rangle dx,
\end{equation}
where both integral terms are well defined in $\mathbb{R}$. 
We  also consider the dual problem of \eqref{undeq},
\begin{equation} \label{undeqdual}
\left\{
\begin{alignedat}{2}
-\text{div}(A \nabla w+wB)  + (c +\alpha) w &=f -\text{div}F&& \quad \mbox{in $U$}\\
w &= 0 &&\quad \mbox{on $\partial U$}.
\end{alignedat} \right.
\end{equation}
Similarly to the above, $w$ is called a weak solution to \eqref{undeqdual} if $w \in H^{1,2}_0(U)$ satisfies  for any $\varphi \in C_0^{\infty}(U)$,
\begin{equation} \label{(maineqdual)}
\int_{U} \langle A \nabla w + w B,  \nabla \varphi \rangle+ (c+\alpha) w \varphi dx   = \int_{U} f \varphi  +\langle F, \nabla \varphi \rangle dx,
\end{equation}
where both integral terms are well defined in $\mathbb{R}$. 
\\
To the author's knowledge, \cite{S65} is the first article to systematically derive the existence and uniqueness of weak solutions to \eqref{undeq}, where $\alpha \geq \bar{\lambda}$ for some constant $\bar{\lambda}>0$, $f=0$, $F \in L^{2}(U)$, $B \in L^d(U, \mathbb{R}^d)$ and $c \in L^{\frac{d}{2}}$ if $d \geq 3$, and $B \in L^{2q}(U, \mathbb{R}^d)$ and $c \in L^q(U)$ for some $q>1$ if $d=2$. 
Indeed, the regularity condition on the coefficients above is minimal in the sense that if $u$ is a weak solution to \eqref{undeq}, then \eqref{(maineq)} holds for all $\varphi \in H^{1,2}_0(U)$, so that the corresponding bilinear form may satisfy the coercivity. In \cite{T73}, some degenerate coefficients $A$ were allowed, and the restriction of $\alpha \geq \overline{\lambda}$ in the situation of \cite{S65} can be relaxed to an arbitrary $\alpha \geq 0$ in case of $c \geq 0$.
More singular drifts such as $B \in L^2(U, \mathbb{R}^d)$ are covered in \cite[Section 2.2.3]{K07} where the existence and uniqueness of weak solutions to \eqref{undeq} are shown under the assumption that $A= id$, $c+\alpha=0$ and ${\rm div} B =0$ weakly in $U$.  Later, assuming that $d \geq 3$, $A=id$, $B \in L^2(U, \mathbb{R}^d)$ with ${\rm div} B \leq 0$ weakly in $U$, $c \in L^{\frac{2d}{d+2}}(U)$ with $c \geq 0$ in $U$ and $\alpha=0$, the existence, uniqueness and boundedness of weak solutions to \eqref{undeq} are shown in \cite[Section 4]{KT20}. 
We refer to the recent articles \cite{KK19, KT20, K21, K22} concerning the existence and uniqueness of weak solutions in $W_0^{1,p}(U)$ ($p>2$) of \eqref{undeq} where singular drifts $B$ are covered. \\
The main target of this paper is to show that by using Dirichlet form theory, the unique weak solutions to \eqref{undeq} with singular coefficients (under assumption {\bf (A)} below) actually have $L^r$-contraction properties with $r \in [1, \infty]$.
 More precisely, by constructing a sub-Markovian $C_0$-resolvent of contractions $(\overline{G}_{\alpha})_{\alpha>0}$ on $L^1(U)$ associated with an $L^1$-closed operator $(\overline{L}, D(\overline{L}))$, we aim to inherit nice properties of $(\overline{G}_{\alpha})_{\alpha>0}$ such as $L^r$-contraction properties with $r \in [1, \infty]$ to the unique solution to \eqref{undeq}. We use the key idea from \cite{S99} (cf. \cite[Chapter 2]{LST22}) and partially extended it to the case where the components of $A$ have no weak differentiability, ${\rm div} B \leq 0$ weakly in $U$ and $c \in L^{2_*}(U)$ with $c \geq 0$, where $2_*:= \frac{2d}{d+2}$ if $d \geq 3$ and $2_* \in (1,2)$ is arbitrary but fixed if $d=2$. The main difference in this paper from \cite{S99} is that our Dirichlet form $(\mathcal{E}^0, D(\mathcal{E}^0))$ is a non-symmetric and sectorial Dirichlet form, and hence the approximation via the corresponding resolvent $(G^0_{\alpha})_{\alpha>0}$ is mainly used rather than the corresponding semigroup $(T^0_t)_{t>0}$ (see Remark \ref{dirichdiffer}).
Before presenting our first main result, let us consider our basic assumption: \\ \\
{\it
{\bf (A)}\; $U$ is a bounded open subset of $\mathbb{R}^d$ with $d\geq 2$. $A=(a_{ij})_{1 \leq i,j \leq d}$ is a $d\times d$  (possibly non-symmetric) matrix of functions in $L^{\infty}(U)$ such that for some constants $\lambda, \Lambda>0$
\begin{equation} \label{assump}
\langle A(x) \xi, \xi \rangle \geq \lambda \|\xi \|^2,   \quad \max_{1 \leq i,j \leq d} |a_{ij}(x)| \leq  \Lambda \quad \text{ for all $x \in U$ and $\xi \in \mathbb{R}^d$}.
\end{equation}
$B \in L^2(U, \mathbb{R}^d)$ satisfies that ${\rm div} B \leq 0$ weakly in $U$, i.e.
\begin{equation} \label{weakdivne}
\int_{U} \langle B, \nabla \varphi \rangle dx \geq 0\;\; \; \text{ for all $\varphi \in C_0^{\infty}(U)$ with $\varphi \geq 0$}.
\end{equation}
$\alpha \in [0, \infty)$ and $c \in L^{2_*}(U)$ with $c \geq 0$, where $2_*:= \frac{2d}{d+2}$ if $d \geq 3$ and $2_* \in (1,2)$ is arbitrary but fixed if $d=2$.
}
\begin{theorem} \label{theomain}
Assume {\bf (A)}. Then, the following hold:
\begin{itemize}
\item[(i)] {\bf (Existence, uniqueness and boundedness).}
Let $f \in L^{2_*}(U)$ and $F \in L^2(U, \mathbb{R}^d)$. Then, there exist a unique weak solution $u$ to \eqref{undeq} and a constant $C_1>0$ which only depends on $\lambda$, $d$ and $|U|$ such that
\begin{equation} \label{enerestim}
\|u\|_{H_0^{1,2}(U)} \leq C_1 \left( \|f\|_{L^{2_*}(U)} + \| F\|_{L^2(U)} \right). 
\end{equation}
Moreover, if $q \in (d/2, \infty)$ with $2_* \leq q$, $f \in L^{q}(U)$ and $F \in L^{2q}(U)$, then $u \in L^{\infty}(U)$ and
\begin{equation} \label{linfinestim}
\|u\|_{L^{\infty}(U)} \leq C_2 \left(\|f\|_{L^{q}(U)} + \| F\|_{L^{2q}(U)}  \right),
\end{equation}
where $C_2>0$ is a constant which only depends on $\lambda$, $d$, $q$ and $|U|$.
\item[(ii)] {\bf (Contraction properties).}
Let
$\alpha \in (0, \infty)$, $r \in [1, \infty]$, $q \in (d/2, \infty)$, $f \in L^r(U) \cap L^{2_*}(U)$ and $F \in L^{2q}(U,\mathbb{R}^d)$. Then,
$u \in H^{1,2}_0(U)$ as in (i) satisfies $u \in L^r(U)$ and 
\begin{align} \label{lrestim}
\| u \|_{L^r(U)} \leq \alpha^{-1} \| f\|_{L^r(U)} +C_2|U|^{1/r}\|F\|_{L^{2q}(U)},
\end{align}
where $C_2>0$ is the constant as in (i). 
\end{itemize}
\end{theorem}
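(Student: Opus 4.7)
The plan is to treat part (i) via an approximation scheme for the singular coefficients together with standard bilinear-form methods, and to prove the $L^r$-contraction of part (ii) by splitting $u = u_1 + u_2$, where $u_1$ solves the problem with data $(f,0)$ and $u_2$ with $(0,F)$, so that $u_1$ carries the contractive contribution while $u_2$ is absorbed into the $L^\infty$-bound from (i).

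\textbf{Part (i).} To establish existence, I would mollify the drift as $B_n := \rho_n * (B\mathbf{1}_U) \in C^\infty$; crucially, ${\rm div}\,B_n = \rho_n * {\rm div}\,B \leq 0$ is preserved, so the sign condition \eqref{weakdivne} survives, and similarly truncate $c$ to $c_n := c \wedge n \in L^\infty$. For the regularized bilinear form Lax--Milgram applies on $H^{1,2}_0(U)$ (coercivity uses Poincar\'e plus positivity of the drift and zero-order contributions), yielding $u_n$; testing with $u_n$ and computing
\[
\int_U \langle B_n, \nabla u_n\rangle u_n\,dx = \tfrac{1}{2}\int_U \langle B_n, \nabla u_n^2\rangle\,dx = -\tfrac{1}{2}\int_U ({\rm div}\,B_n)\,u_n^2\,dx \geq 0,
\]
combined with the Sobolev embedding $H^{1,2}_0(U) \hookrightarrow L^{2^*}(U)$ to bound $\int f u_n$, yields the uniform estimate \eqref{enerestim}. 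A weak-compactness argument then produces a solution $u$ of \eqref{(maineq)}, and uniqueness follows by testing the homogeneous equation with (a truncation of) the solution itself. The $L^\infty$-bound \eqref{linfinestim} is a standard Moser iteration based on test functions $\mathrm{sgn}(u)(|u|\wedge k)^{p-1}$ for increasing $p$; the drift contribution remains favorable by the same nonnegativity computation.

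\textbf{Part (ii) and the main obstacle.} Fix $\alpha>0$ and split $u = u_1 + u_2$ by linearity and uniqueness from (i). For $u_2$, part (i) applied with data $(0,F)$ gives $u_2 \in L^\infty(U)$ with $\|u_2\|_{L^\infty} \leq C_2\|F\|_{L^{2q}}$, whence $\|u_2\|_{L^r} \leq |U|^{1/r}\|u_2\|_{L^\infty} \leq C_2|U|^{1/r}\|F\|_{L^{2q}}$. For $u_1$ and $r \in [1,\infty)$, I would test the equation with an approximation of $\phi_r(v) := \mathrm{sgn}(v)|v|^{r-1}$ (formal derivative $(r-1)|v|^{r-2}$); all three left-hand contributions then become nonnegative---the $A$-term by ellipticity since $\int (r-1)|u_1|^{r-2}\langle A \nabla u_1, \nabla u_1\rangle\,dx \geq 0$, the $B$-term via $\int \langle B, \nabla u_1\rangle \phi_r(u_1)\,dx = \tfrac{1}{r}\int \langle B, \nabla |u_1|^r\rangle\,dx \geq 0$ by ${\rm div}\,B \leq 0$, and $\int (c+\alpha) u_1 \phi_r(u_1)\,dx = \int (c+\alpha)|u_1|^r\,dx \geq \alpha \|u_1\|_{L^r}^r$---yielding $\alpha\|u_1\|_{L^r}^r \leq \int f \phi_r(u_1)\,dx \leq \|f\|_{L^r}\|u_1\|_{L^r}^{r-1}$, i.e.\ $\|u_1\|_{L^r} \leq \alpha^{-1}\|f\|_{L^r}$. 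The case $r = \infty$ then follows by sending $r \to \infty$, using $|U|<\infty$ together with $u_1, f \in L^\infty$ (from part (i) applied with data $(f,0)$ when $f \in L^\infty$). The main obstacle is the rigorous justification of $\phi_r(u_1)$ as a test function when $u_1$ is merely in $H^{1,2}_0(U)$ and $B \in L^2(U,\mathbb{R}^d)$: the remedy is a double approximation---smoothing the coefficients as in (i) and replacing $\phi_r$ by a regularization such as $\mathrm{sgn}_\varepsilon(u_1)(|u_1|\wedge k)^{r-1}$---then passing to the limits in the correct order while preserving ${\rm div}\,B \leq 0$ to retain the sign of the drift term throughout.
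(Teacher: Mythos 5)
Your overall architecture is workable, but there is one concrete gap in the existence step, and your two key arguments (uniqueness, $L^r$-contraction) take a genuinely different route from the paper, so let me address both.

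\textbf{A genuine gap in the approximation of $B$.} You claim that for $B_n:=\rho_n*(B\mathbf{1}_U)$ one has ${\rm div}\,B_n=\rho_n*{\rm div}\,B\le 0$, and you use this sign twice: for the Lax--Milgram coercivity of the regularized form on $H^{1,2}_0(U)$ and for the uniform bound on $u_n$. This is false in general up to the boundary: \eqref{weakdivne} only controls ${\rm div}\,B$ tested against nonnegative $\varphi\in C_0^\infty(U)$, while the zero extension $B\mathbf{1}_U$ acquires a distributional divergence carrying a signed boundary contribution (already for $B\equiv e_1$ on a ball), so ${\rm div}\,B_n$ can be positive, and large, in an $O(1/n)$-neighborhood of $\partial U$. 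Consequently $\int_U\langle B_n,\nabla u\rangle u\,dx=-\tfrac12\int_U({\rm div}\,B_n)u^2\,dx$ has no sign for general $u\in H^{1,2}_0(U)$, and neither the coercivity nor the $n$-uniform estimate \eqref{enerestim} is justified as written. The paper's remedy in the proof of Theorem \ref{theo2.6}(iii) is precisely to note that ${\rm div}\,B_n\le 0$ does hold weakly on the shrunken domains $U_{\delta/2n}$, to solve the mollified problems there (via Trudinger's results), extend by zero and pass to the weak limit; you need this (or an equivalent device) to close your existence argument. The rest of your part (i) (limit passage, Moser iteration with favorable drift sign via nonnegative primitives) is consistent with the paper.

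\textbf{A different, more elementary route for uniqueness and for (ii).} For uniqueness you test the homogeneous equation with a truncation $\tau_k(u)$; after extending \eqref{(maineq)} to test functions in $H^{1,2}_0(U)_b$ (which the paper also does), the drift term equals $\int_U\langle B,\nabla\Phi_k(u)\rangle dx\ge 0$ with $\Phi_k(u)\ge 0$ in $H^{1,2}_0(U)$, all three left-hand terms are nonnegative, and $\nabla u=0$ follows; this is sound, but it is not the paper's argument: the paper proves uniqueness by a duality pairing with a \emph{bounded} weak solution $w$ of the dual problem \eqref{undeqdual} (Theorem \ref{theo3.2}(ii)), following \cite{K07,KT20}. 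Similarly, for (ii) you prove $\|u_1\|_{L^r}\le\alpha^{-1}\|f\|_{L^r}$ directly by testing with regularized/truncated powers ${\rm sgn}(u_1)(|u_1|\wedge k)^{r-1}$ and exploiting the signs of all three terms, whereas the paper constructs a sub-Markovian $C_0$-resolvent of contractions $(\overline G_\alpha)_{\alpha>0}$ on $L^1(U)$ via Dirichlet form theory and Lumer--Phillips (Theorems \ref{theo1.2}, \ref{theo1.4}), identifies $G_\alpha f$ with the unique solution for data $(f,0)$ (Theorem \ref{theo2.6}(v) plus uniqueness), and gets \eqref{contraprop} from sub-Markovianity and Riesz--Thorin; both proofs then split off the $F$-part and absorb it by the $L^\infty$ bound, exactly as you do. Your route, once the truncation technicalities you flag are carried out (in particular $1<r<2$ needs the extra regularization near $0$, and the drift sign must always be run through a nonnegative Lipschitz primitive in $H^{1,2}_0(U)$ together with the extension of \eqref{weakdivne} to nonnegative $H^{1,2}_0$ functions), is more elementary and bypasses Sections \ref{sec2}--\ref{sec3} entirely; what the paper's machinery buys in exchange is the resolvent/semigroup structure itself and the extended $L^1$-contraction of Theorem \ref{theo4.2} (for $g\in L^1$, $G\in L^2$ only), which is what drives the $L^1$-stability Corollary \ref{stabil} and is not delivered by your direct computation without further work.
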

\noindent
We mention that the main idea for the proof of Theorem \ref{theomain}(i) has already been established by the proofs in \cite[Section 2.2.3]{K07} and \cite[Section 4]{KT20}, where the existence of a bounded weak solution $w$ to \eqref{undeqdual} is crucially used to show the uniqueness of the weak solution to \eqref{undeq}. But, Theorem \ref{theomain}(ii) which shows the $L^r$-contraction properties of weak solutions is our main achievement, and its proof is based on a nontrivial application of the Dirichlet form theory and the uniqueness of weak solutions to \eqref{undeq}. 
It is remarkable that under the assumption {\bf (A)} with $f \in L^{2_*}(U)$ and $F=0$, the sequence of unique weak solutions to \eqref{undeq} converges to $0$ as $\alpha$ goes to infinity.
\\
Our second main result is concerned with $L^1$-stability for the unique solutions to \eqref{undeq} which is indeed a corollary of an extended version of the $L^1$-contraction property (Theorem \ref{theo4.2}).
As in Theorem \ref{theomain}(ii), the following result is non-trivial even in the case where $A=id$, $B=0$ and $c \in L^{2_*}(U)$ with $c \geq 0$ on $U$.
\begin{corollary}[\bf $L^1$-stability] \label{stabil}
Assume {\bf (A)}. Let $\alpha \in (0, \infty)$, $f \in L^{1}(U)$ and $F \in L^2(U, \mathbb{R}^d)$.
For each $n \geq 1$, let $A_n=(a^n_{ij})_{1 \leq i,j \leq d}$ be a matrix of functions in $L^{\infty}(U)$ such that $\langle A_n(x) \xi , \xi \rangle \geq \lambda \| \xi\|^2$ for any $x \in U$ and $\xi \in \mathbb{R}^d$, where $\lambda>0$ is the constant in \eqref{assump}. For each $n \geq 1$, let $B_n  \in L^2(U, \mathbb{R}^d)$, $c_n \in L^{2_*}(U)$, $f_n \in L^1(U)$ and $F_n \in L^2(U, \mathbb{R}^d)$, and let $u_n$ be a weak solution to 
\begin{equation} \label{undeqappro}
\left\{
\begin{alignedat}{2}
-\text{\rm div}(A_n \nabla u_n) + \langle B_n, \nabla u_n \rangle + (c_n +\alpha) u_n &=f_n -\text{\rm div}F_n&& \quad \mbox{in $U$}\\
u_n &= 0 &&\quad \mbox{on $\partial U$}.
\end{alignedat} \right.
\end{equation}
Let $u$ be a weak solution to \eqref{undeq}. Then, the following estimate holds:
\begin{align}
&\|u_n-u\|_{L^1(U)} \leq  \alpha^{-1} C_4  \Big( \|B-B_n\|_{L^2(U)}  + N\|c-c_n\|_{L^{2_*}(U)} \Big)  \nonumber \\
&\quad + C_3 \|(A_n-A) \nabla u \|_{L^2(U)} + \alpha^{-1} \|f-f_n\|_{L^1(U)} + C_3\|F-F_n\|_{L^2(U)}, \label{stabilest}
\end{align}
where $C_4= C_1 \left( \|f\|_{L^{2_*}(U)} + \| F\|_{L^2(U)} \right)$, $C_3= |U|^{1/2}C_1$ and
$N=\frac{2(d-1)}{d-2}$ if $d \geq 3$ and $N =\frac{2^*}{2} |U|^{\frac{1}{2^*}}$ with $2^*:=\frac{2_*}{2_*-1}$ if $d=2$ and
$C_1>0$ is the constant as in Theorem \ref{theomain}(i). 
In particular, if for some $M>0$, $\max_{1 \leq i,j \leq d}\|a^n_{ij}\|_{L^{\infty}(U)} \leq M$  for all $n \geq 1$ and $\lim_{n \rightarrow \infty}a^n_{ij}=a_{ij}$ a.e. for each $1 \leq i,j \leq d$, $\lim_{n \rightarrow \infty} B_n = B$ and $\lim_{n \rightarrow \infty} F_n = F$ in $L^2(U, \mathbb{R}^d)$,
$\lim_{n \rightarrow \infty} c_n = c$ in $L^{2_{*}}(U)$ and $\lim_{n \rightarrow \infty} f_n =f$ in $L^1(U)$, then $\lim_{n \rightarrow \infty} u_n =u$ in $L^1(U)$.
\end{corollary}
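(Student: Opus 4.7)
My plan is to view the difference $v := u_n - u$ as a weak solution to a linear problem whose right-hand side encodes the coefficient discrepancies, and then to invoke the extended $L^1$-contraction property (Theorem \ref{theo4.2}). Subtracting the weak formulations \eqref{(maineq)} for $u$ and \eqref{undeqappro} for $u_n$, one sees that $v \in H_0^{1,2}(U)$ satisfies, for every $\varphi \in C_0^\infty(U)$,
\[
\int_U \langle A_n \nabla v, \nabla\varphi\rangle + \langle B_n,\nabla v\rangle\varphi + (c_n+\alpha) v \varphi\, dx = \int_U \widetilde f\,\varphi + \langle \widetilde F,\nabla\varphi\rangle\,dx,
\]
with
\[
\widetilde f := (f_n - f) - \langle B_n - B, \nabla u\rangle - (c_n - c)u, \qquad \widetilde F := (F_n - F) - (A_n - A)\nabla u.
\]
A quick check using Cauchy--Schwarz on the $B$-term, and Hölder combined with the Sobolev embedding $H_0^{1,2}(U) \hookrightarrow L^{2_*/(2_*-1)}(U)$ on the $c$-term, confirms that $\widetilde f \in L^1(U)$ and $\widetilde F \in L^2(U,\mathbb{R}^d)$.

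Next I would apply Theorem \ref{theo4.2} to the equation for $v$ to obtain
\[
\|v\|_{L^1(U)} \leq \alpha^{-1}\|\widetilde f\|_{L^1(U)} + C_3\|\widetilde F\|_{L^2(U)},
\]
after which \eqref{stabilest} follows by controlling each piece in terms of the tabulated coefficient differences. The energy estimate \eqref{enerestim} absorbs $\nabla u$ and $u$: Cauchy--Schwarz gives $\|\langle B - B_n,\nabla u\rangle\|_{L^1(U)} \leq C_4\|B - B_n\|_{L^2(U)}$, while Hölder combined with the Sobolev/Gagliardo--Nirenberg constant $N$ (explicitly $2(d-1)/(d-2)$ when $d\geq 3$ and $(2^*/2)|U|^{1/2^*}$ when $d = 2$) gives $\|(c - c_n)u\|_{L^1(U)} \leq NC_4\|c - c_n\|_{L^{2_*}(U)}$; the remaining terms pass through directly via $\|\widetilde F\|_{L^2(U)} \leq \|F - F_n\|_{L^2(U)} + \|(A_n - A)\nabla u\|_{L^2(U)}$.

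For the convergence statement, all four error norms on the right-hand side of \eqref{stabilest} tend to $0$ by hypothesis; the one that requires a brief argument is $\|(A_n - A)\nabla u\|_{L^2(U)}$, for which dominated convergence applies since the integrand is pointwise bounded by $((M + \Lambda)|\nabla u|)^2 \in L^1(U)$ and vanishes almost everywhere. The main obstacle is ensuring that Theorem \ref{theo4.2} is stated in a form strong enough to cover the present mixed data $\widetilde f \in L^1(U)$ and $\widetilde F \in L^2(U,\mathbb{R}^d)$ together with the approximating coefficient set $(A_n, B_n, c_n)$; once this extended $L^1$-contraction is in hand, the remainder is routine bookkeeping via Hölder, Cauchy--Schwarz and standard Sobolev embeddings.
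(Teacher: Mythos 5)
Your decomposition places the approximating coefficients $(A_n,B_n,c_n)$ in the operator acting on $v=u_n-u$ and then invokes Theorem \ref{theo4.2} for that operator. This is precisely where the argument breaks down: Theorem \ref{theo4.2} is proved only under assumption {\bf (A)} for the coefficients of the equation that $v$ solves, and the corollary assumes nothing about the sign structure of the approximating lower-order coefficients --- only $B_n\in L^2(U,\mathbb{R}^d)$ and $c_n\in L^{2_*}(U)$, with no requirement that ${\rm div}\,B_n\le 0$ weakly or that $c_n\ge 0$. Without these conditions the entire $L^1$-machinery behind Theorem \ref{theo4.2} (dissipativity of $L$ on $L^1(U)$, the sub-Markovian resolvent, the contraction with constant $\alpha^{-1}$) is unavailable, and the bound $\|v\|_{L^1(U)}\le \alpha^{-1}\|\widetilde f\|_{L^1(U)}+C_3\|\widetilde F\|_{L^2(U)}$ cannot be asserted; for sufficiently negative $c_n$ even uniqueness for the $v$-equation can fail. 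So the point you flag at the end as ``ensuring Theorem \ref{theo4.2} is stated in a form strong enough'' is not a formality: the mixed data $\widetilde f\in L^1(U)$, $\widetilde F\in L^2(U,\mathbb{R}^d)$ is indeed covered by Theorem \ref{theo4.2}, but the coefficient set $(A_n,B_n,c_n)$ is not, and this is the crux of the proof rather than routine bookkeeping.

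The paper subtracts the two weak formulations the other way around exactly to avoid this: it keeps $A_n$ together with the \emph{original} $B$ and $c$ on the left-hand side (these do satisfy {\bf (A)}, and $A_n$ is elliptic with the same constant $\lambda$, which is all the constants $C_1,C_2$ depend on), so that $u-u_n$ is a weak solution of
\begin{equation*}
-\mathrm{div}\big(A_n\nabla(u-u_n)\big)+\langle B,\nabla(u-u_n)\rangle+(c+\alpha)(u-u_n)=\widetilde g-\mathrm{div}\,\widetilde G,
\end{equation*}
with $\widetilde g=\langle B_n-B,\nabla u_n\rangle+(c_n-c)u_n+f-f_n$ and $\widetilde G=(A_n-A)\nabla u+F-F_n$, to which Theorem \ref{theo4.2} applies legitimately; the discrepancy terms are then treated by Cauchy--Schwarz, H\"older and Sobolev as in your bookkeeping (note that in this decomposition the $B$- and $c$-discrepancies multiply $u_n$, not $u$, which is the price paid for keeping the admissible drift and zero-order term on the left). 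Your verification that $\widetilde f\in L^1$, $\widetilde F\in L^2$, and your dominated-convergence argument for $\|(A_n-A)\nabla u\|_{L^2(U)}\to 0$ are fine and coincide with the paper's final step; the sole, but essential, defect is the application of the $L^1$-contraction to an operator whose drift and zero-order term carry no negativity/positivity structure.
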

\noindent
The proofs of Theorem \ref{theomain} and Corollary \ref{stabil} are described in Section \ref{sec5}.
We expect our $L^1$-stability result to be meaningful numerically. Of course, the weak compactness of $H^{1,2}_0(U)$ and \eqref{enerestim} in Theorem \ref{theomain}(i) make us construct a sequence of functions $(\widetilde{u}_n)_{n \geq 1}$ in $H^{1,2}_0(U)$ satisfying that $\lim_{n \rightarrow \infty} \widetilde{u}_n =u$ weakly in $H^{1,2}_0(U)$ where $\widetilde{u}_n$ is a weak solution to an approximation of \eqref{undeq}. However, $(\widetilde{u}_n)_{n \geq 1}$ is
 eventually extracted as a subsequence of $(u_n)_{n \geq 1}$ in Corollary \ref{stabil}. On the other hand, our $L^1$-stability result allows us to explicitly construct a sequence of functions $(u_n)_{n \geq 1}$ that strongly converges to $u$ in $L^1(U)$, where $u_n$ is a solution to the approximation of \eqref{undeq}. 
We refer to \cite[Theorems 2.8, 6.4]{Kr21} for a relevant stability result. \\
\noindent
Our paper is structured as follows. Section \ref{sec2} presents our notations mainly covered in this paper and introduces basic knowledge of Dirichlet form theory. In Section \ref{sec3}, a sub-Markovian $C_0$-resolvent of contractions on $L^1(U)$ associated with an $L^1$-closed operator is constructed based on the idea of \cite{S99}. We construct in Section \ref{sec4} bounded weak solutions to \eqref{undeq} and \eqref{undeqdual} using the methods of weak convergence and Moser's iteration. Finally, in Section \ref{sec5} the uniqueness of weak solutions to \eqref{undeq} and our $L^1$-stability are proved.

\section{Framework} \label{sec2}
\noindent For notations dealt with in this paper, we refer to \cite[Notations and Conventions]{LST22}.
Throughout this paper, we consider the Euclidean space $\mathbb{R}^d$, equipped with the Euclidean inner product $\langle \cdot , \cdot \rangle$ and the Euclidean norm $\| \cdot \|$. The indicator function of $A \subset \mathbb{R}^d$ is denoted by $1_A$. For $a,b \in \mathbb{R}$ with $a \leq b$, we write $a \wedge b=b \wedge a :=a$. Denote the Lebesgue measure by $dx$ and write $|E|:=dx(E)$, where $E$ is a Lebesgue measurable set. 
Let $U$ be an open subset of $\mathbb{R}^d$. Denote the space of all continuous functions on $U$ by $C(U)$.  Write $C_0(U):= C(U)_0$. For $k \in \mathbb{N} \cup \{ \infty \}$, denote by $C^k(U)$ the set of all $k$-times differentiable functions and define $C^k_0(U):=C^k(U) \cap C_0(U)$. Define $C_0^{\infty}(U, \mathbb{R}^d):=\{F=(f_1, \ldots, f_d) : f_i \in C^{\infty}_0(U), i=1,2, \ldots, d  \}$.
For each $p \in [1, \infty]$, the usual $L^p$-space on $U$ with respect to $dx$ is denoted by $L^p(U)$ equipped with $L^p$-norm $\| \cdot \|_{L^p(U)}$. Denote the space of $L^p$-vector fields on $U$ by $L^p(U, \mathbb{R}^d)$ equipped with the norm $\|F\|_{L^p(U)} := \big \|  \|F\| \big\|_{L^p(U)}$, $F \in L^p(U, \mathbb{R}^d)$. 
The Sobolev space $H^{1,2}(U)$ is defined to be the space of all functions $f \in L^2(U)$ for which $\partial_j f \in L^2(U)$, $j=1, \ldots, d$ equipped with the norm
$\|f\|_{H^{1,2}(U)}:=\left(\|f\|^2_{L^2(U)}  +\sum_{i=1}^d \|\partial_i f \|^2_{L^2(U)} \right)^{1/2}$, where $\partial_i f$ is is the $i$-th weak partial derivative of $f$ on $U$.
$H^{1,2}_0(U)$ denotes the closure of $C_0^{\infty}(U)$ in $H^{1,2}(U)$ equipped with the norm $\|\cdot \|_{H^{1,2}_0(U)} :=\|\cdot \|_{H^{1,2}(U)}$.
We write $2_*:= \frac{2d}{d+2}$ if $d \geq 3$ and $2_* \in (1,2)$ is arbitrary but fixed if $d=2$. Let $2^*= \frac{2d}{d-2}$ if $d \geq 3$ and $2^* =\frac{2_*}{2_*-1}$ if $d=2$.
If $\mathcal{A}$ is a set of Lebesgue measurable functions, we define 
$\mathcal{A}_0:= \{ f  \in \mathcal{A}: \text{supp}(|f| dx) \text{ is compact in $U$}   \}$ and $\mathcal{A}_b := \mathcal{A} \cap L^{\infty}(U)$. 
\\
From now, we assume $d \geq 2$. 
Here we introduce basic knowledge of Dirichlet form theory.
Assume that $U$ is a bounded open subset of $\mathbb{R}^d$ and \eqref{assump} holds.
Consider a bilinear form $(\mathcal{E}^0, C_0^{\infty}(U))$ defined by
$\mathcal{E}^0(f,g) = \int_{U} \langle  A \nabla f, \nabla g \rangle dx$, $f,g \in C_0^{\infty}(U)$,
where $\nabla f = (\partial_1 f, \ldots, \partial_d f)$.
Then, as a consequence of the results in \cite[II, 2, b)]{MR}, $(\mathcal{E}^0, C_0^{\infty}(U))$ is closable on $L^2(U)$, i.e. for any sequence of functions $(u_n)_{n \geq 1}$ in $C_0^{\infty}(U)$ satisfying that $\lim_{n,m \rightarrow \infty}\mathcal{E}^0(u_n-u_m, u_n-u_m)=0$ and $\lim_{n \rightarrow \infty} u_n =0$ in $L^2(U)$, it holds $\lim_{n \rightarrow \infty} \mathcal{E}^0(u_n, u_n)=0$. Denote by $(\mathcal{E}^0, D(\mathcal{E}^0))$ the closure of $(\mathcal{E}^0, C_0^{\infty}(U))$ on $L^2(U)$. Let $\mathcal{E}^0_{\alpha}(u,v) = \mathcal{E}^0(u,v)+\alpha \int_{U} uv dx$, $u, v \in D(\mathcal{E}^0)$ for $\alpha>0$. Indeed, as a consequence of the fact that $A$ is uniformly elliptic and bounded, we obtain that $\mathcal{E}^0_1(\cdot, \cdot)^{1/2}$ is equivalent to $\| \cdot \|_{H^{1,2}_0(U)}$ and that $D(\mathcal{E}^0) = H^{1,2}_0(U)$. In particular, by using the Cauchy-Schwarz inequality, for any $f,g \in D(\mathcal{E}^0)$,
$\left| \mathcal{E}^0(f,g) \right| \leq  d \Lambda \lambda^{-1} \mathcal{E}^0(f,f)^{1/2}  \mathcal{E}^0(g,g)^{1/2}$, and hence
 $(\mathcal{E}^0, D(\mathcal{E}^0))$  is a coercive closed form (see \cite[I. Definition 2.4]{MR}). According to the contents in \cite[I, 1.2]{MR}, denote by $(G^0_{\alpha})_{\alpha>0}$, $(T^0_t)_{t>0}$ and $(L^0, D(L^0))$ the strongly continuous contraction resolvent on $L^2(U)$ (called {\it $C_0$-resolvent of contractions on $L^2(U)$}),  the strongly continuous contraction semigroup on $L^2(U)$ (called {\it $C_0$-semigroup of contractions on $L^2(U)$}) and the generator on $L^2(U)$ which are associated with $(\mathcal{E}^0, D(\mathcal{E}^0))$, respectively.
On the other hand, it follows from \cite[I. Proposition 4.7]{MR} and the explanation after \cite[II. (2.18)]{MR} that  $(\mathcal{E}^0, D(\mathcal{E}^0))$  is a Dirichlet form (see \cite[I. Definition 4.5]{MR}). Hence, by \cite[I. Proposition 4,3, Theorem 4.4]{MR} $(G^0_{\alpha})_{\alpha>0}$ and $(T^0_t)_{t>0}$ are a sub-Markovian $C_0$-resolvent and a sub-Markovian $C_0$-semigroup of contractions on $L^2(U)$, respectively.

\begin{proposition} \label{prop1.1}
Assume that $U$ is a bounded open subset of $\mathbb{R}^d$ and \eqref{assump} holds.  Then, there exists a sub-Markovian $C_0$-semigroup $(\overline{T}^0_t)_{t>0}$ (resp. sub-Markovian $C_0$-resolvent $(\overline{G}_{\alpha}^0)_{\alpha>0}$) of contractions on $L^1(U)$ such that for each $f \in L^{2}(U)$, $\overline{T}^0_t f = T^0_t f$ for all $t>0$  (resp. $\overline{G}^0_{\alpha} f = G^0_{\alpha}f$ for all $\alpha>0$). Let $(\overline{L}^0, D(\overline{L}^0))$ be the generator of $(\overline{T}^0_t)_{t>0}$ on $L^1(U)$. Then, $(L^0, D(L^0)) \subset (\overline{L}^0, D(\overline{L}^0))$, i.e. $D(L^0) \subset D(\overline{L}^0)$ and $\overline{L}^0 u = L^0 u$ for all $u \in D(L^0)$. 
\end{proposition}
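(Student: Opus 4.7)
The plan is to transport the sub-Markovian property from $L^{2}(U)$ to $L^{1}(U)$ by a standard duality extension, using in an essential way that $U$ is bounded (so $L^{2}(U) \hookrightarrow L^{1}(U)$ densely) and that both $(\mathcal{E}^{0}, D(\mathcal{E}^{0}))$ and its co-form are Dirichlet forms (the latter corresponds to the uniformly elliptic bounded matrix $A^{T}$).

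First, I would record the $L^{\infty}$-contractivity. Since $(T^{0}_{t})_{t>0}$ is sub-Markovian on $L^{2}(U)$, for $f \in L^{2}(U) \cap L^{\infty}(U)$ the inequalities $-\|f\|_{L^{\infty}} \leq f \leq \|f\|_{L^{\infty}}$ yield $\|T^{0}_{t} f\|_{L^{\infty}(U)} \leq \|f\|_{L^{\infty}(U)}$; the same holds for the resolvent $\alpha G^{0}_{\alpha}$. Applying this reasoning to the adjoint (co-)semigroup $(\widehat{T}^{0}_{t})_{t>0}$, which is sub-Markovian because $(\mathcal{E}^{0}, D(\mathcal{E}^{0}))$ is a (non-symmetric) Dirichlet form in the sense of \cite[I. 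Definition 4.5]{MR}, one gets $L^{\infty}$-contractivity of $\widehat{T}^{0}_{t}$ and $\alpha \widehat{G}^{0}_{\alpha}$. Dualizing this against the pairing between $L^{1}(U)$ and $L^{\infty}(U)$ then shows that for every $f \in L^{2}(U)$,
\begin{equation*}
\|T^{0}_{t} f\|_{L^{1}(U)} \leq \|f\|_{L^{1}(U)}, \qquad \|\alpha G^{0}_{\alpha} f\|_{L^{1}(U)} \leq \|f\|_{L^{1}(U)}.
\end{equation*}

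Since $U$ is bounded, $L^{2}(U)$ is a dense subspace of $L^{1}(U)$; the $L^{1}$-contractivity just established permits a unique bounded linear extension of $T^{0}_{t}$ and $G^{0}_{\alpha}$ to operators $\overline{T}^{0}_{t}$ and $\overline{G}^{0}_{\alpha}$ on $L^{1}(U)$, with operator norms at most $1$ and $\alpha^{-1}$, respectively. The semigroup and resolvent identities extend from the dense subspace $L^{2}(U)$ by continuity. For sub-Markovianity on $L^{1}(U)$ I would take $f \in L^{1}(U)$ with $0 \leq f \leq 1$, approximate by $f_{n} \in L^{2}(U)$ with $0 \leq f_{n} \leq 1$ (e.g. $f_{n} = f \wedge n \cdot \mathbf{1}_{U}$), observe that $0 \leq T^{0}_{t} f_{n} \leq 1$ a.e. by $L^{2}$-sub-Markovianity, and pass to an a.e.\ convergent subsequence of $T^{0}_{t} f_{n} \to \overline{T}^{0}_{t} f$ in $L^{1}(U)$; similarly for the resolvent.

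Strong continuity of $(\overline{T}^{0}_{t})_{t>0}$ on $L^{1}(U)$ follows by a standard $3\varepsilon$-argument: given $f \in L^{1}(U)$ and $\varepsilon > 0$, choose $g \in L^{2}(U) \cap L^{\infty}(U)$ with $\|f-g\|_{L^{1}(U)} < \varepsilon$; then $T^{0}_{t} g \to g$ in $L^{2}(U)$ as $t \to 0^{+}$ and hence in $L^{1}(U)$ (by $|U| < \infty$), and $\|\overline{T}^{0}_{t}(f-g)\|_{L^{1}(U)} \leq \|f-g\|_{L^{1}(U)}$ by contractivity. An analogous argument, using $\|\alpha \overline{G}^{0}_{\alpha} g - g\|_{L^{2}(U)} \to 0$ as $\alpha \to \infty$ for $g \in L^{2}(U)$, yields strong continuity of $(\overline{G}^{0}_{\alpha})_{\alpha>0}$ on $L^{1}(U)$. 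Finally, if $u \in D(L^{0})$, then $t^{-1}(T^{0}_{t} u - u) \to L^{0} u$ in $L^{2}(U)$ as $t \to 0^{+}$, hence also in $L^{1}(U)$ because $|U| < \infty$; by definition of the generator $(\overline{L}^{0}, D(\overline{L}^{0}))$ this gives $u \in D(\overline{L}^{0})$ and $\overline{L}^{0} u = L^{0} u$, proving the stated inclusion.

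The only non-routine point is the dualization step that upgrades sub-Markovianity (a pointwise property) to $L^{1}$-contractivity (a norm property); everything else is standard continuity-plus-density bookkeeping, made clean by boundedness of $U$.
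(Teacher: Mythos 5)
Your proposal is correct and follows essentially the same route as the paper: sub-Markovianity of the adjoint (co-)semigroup, which the paper realizes via the Dirichlet form associated with $A^{T}$, gives $L^{\infty}$-contractivity, duality yields the $L^{1}$-contraction bound on $L^{2}(U)$, and density plus the estimate $\|\cdot\|_{L^{1}(U)} \leq |U|^{1/2}\|\cdot\|_{L^{2}(U)}$ handle the extension, strong continuity, and generator inclusion. Your direct argument for $(L^{0},D(L^{0})) \subset (\overline{L}^{0},D(\overline{L}^{0}))$ (convergence of the difference quotients in $L^{2}$ implies convergence in $L^{1}$ on the bounded set $U$) is a slight, valid simplification of the paper's integral-representation step and changes nothing substantive.
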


\begin{proof}
As a consequence of the results in \cite[II, 2, b)]{MR}, $(\hat{\mathcal{E}}^0, C_0^{\infty}(U))$ defined by
$\hat{\mathcal{E}}^0(f,g) = \int_{U} \langle  A^T \nabla f, \nabla g \rangle dx$, $f,g \in C_0^{\infty}(U)$, is closable.  Moreover, 
by \cite[I. Proposition 4.7]{MR}, the explanation after \cite[II. (2.18)]{MR} and \cite[I. Proposition 4,3, Theorem 4.4]{MR}, the closure of $(\hat{\mathcal{E}}^0, C_0^{\infty}(U))$ denoted by  $(\hat{\mathcal{E}}^0, D(\hat{\mathcal{E}}^0))$ is a Dirichlet form which generates a sub-Markovian $C_0$-semigroup of contractions $(\hat{T}^0_t)_{t>0}$ on $L^2(U)$. Then, by \cite[I. Theorem 2.8]{MR}, $(T^0_t)_{t>0}$ is the adjoint of $(\hat{T}^0_t)_{t>0}$ on $L^2(U)$. Thus, for each $t>0$ and $f \in L^{2}(U)$
\begin{align} \label{l1contra}
\int_{U}|T^0_t f| dx \leq \lim_{n \rightarrow \infty} \int_{U} T^0_t |f| \cdot 1_{D_n} dx = \lim_{n \rightarrow \infty} \int_{U} |f| \cdot \hat{T}^0_t 1_{D_n} dx \leq \int_{U} |f| dx,
\end{align}
where $D_n=\{ x \in \mathbb{R}^n:  \|x\|<n  \}$.
Since $L^{2}(U)$ is dense in $L^1(U)$, \eqref{l1contra} implies that $(T^0_t)_{t>0}$ uniquely extends to a sub-Markovian semigroup of contractions on $L^1(U)$, say $(\overline{T}_t^0)_{t>0}$. Since $\|\overline{T}^0_t f -f\|_{L^1(U)} \leq |U|^{1/2} \|T^0_t f -f\|_{L^2(U)}$  for all $t>0$ and $f \in L^{2}(U)$,
the $L^1$-contraction property of $(\overline{T}^0_t)_{t>0}$ and the strong continuity of $(T^0_t)_{t>0}$ on $L^2(U)$ imply that
$(\overline{T}^0_t)_{t>0}$ is strongly continuous on $L^1(U)$. Let $u \in D(L^0)$. Then, $u, L^0 u \in L^2(U)$, so that
$\overline{T}^0_t u = T_t^0 u$ and $\overline{T}^0_t L^0 u = T_t^0 L^0 u$ for every $t>0$. Using the strong continuity of $(\overline{T}^0_t)_{t>0}$ on $L^1(U)$, 
$$
\frac{1}{t}(\overline{T}^0_t u - u)=\frac{1}{t}(T^0_t u - u) = \frac{1}{t} \int_0^t T_s^0 L^0 u\, ds = \frac{1}{t} \int_0^t \overline{T}_s^0 L^0 u\, ds \rightarrow L^0 u  \;\;\text{ in $L^1(U)$}
$$
as $t \rightarrow 0+$. Thus, $u \in D(\overline{L}^0)$ and $\overline{L}^0 u = L^0 u$. 
\end{proof}
Hereafter, as in Proposition \ref{prop1.1} $(\overline{G}^0_{\alpha})_{\alpha>0}$ and $(G^0_{\alpha})_{\alpha>0}$ denote the sub-Markovian $C_0$-resolvents of contractions associated with $(\overline{T}_t^0)_{t>0}$ and $(T_t^0)_{t>0}$ on $L^1(U)$ and $L^2(U)$, respectively. $(L^0, D(L^0))$ denotes the generator on $L^2(U)$ associated with $(G^0_{\alpha})_{\alpha>0}$, while $(\overline{L}^0, D(\overline{L}^0))$ denotes the generator on $L^1(U)$ associated with $(\overline{G}^0_{\alpha})_{\alpha>0}$.

\begin{lemma} \label{boleulem}
Assume that $U$ is a bounded open subset of $\mathbb{R}^d$ and \eqref{assump} holds.
Let $u \in H^{1,2}_0(U)_b$ and $w \in L^1(U)$. Assume that  
$$
\mathcal{E}^0(u, v) =-\int_{U} w v dx \;\;\text{ for all $v \in H_0^{1,2}(U)_b$.}
$$
Then, $u \in D(\overline{L}^0)_b$ and $\overline{L}^0u =w$.
\end{lemma}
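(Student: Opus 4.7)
The strategy is to show that for some (equivalently every) $\alpha>0$, the function $u$ solves the resolvent equation
$$
u=\overline{G}^0_\alpha(\alpha u-w)\qquad\text{a.e.\ in }U.
$$
Once this is proved, the definition of the generator of a $C_0$-semigroup yields $u\in D(\overline{L}^0)$ with $(\alpha-\overline{L}^0)u=\alpha u-w$, hence $\overline{L}^0u=w$; combined with $u\in L^\infty(U)$ this gives $u\in D(\overline{L}^0)_b$. The right-hand side is well defined because the boundedness of $U$ forces $u\in L^\infty(U)\subset L^1(U)$, so $\alpha u-w\in L^1(U)$.

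To produce the resolvent identity I would work with the coform $\hat{\mathcal{E}}^0$ associated with $A^T$ introduced in the proof of Proposition \ref{prop1.1}, whose domain is again $H^{1,2}_0(U)$. Let $(\hat{G}^0_\alpha)_{\alpha>0}$ be its $L^2$-resolvent, characterized by $\mathcal{E}^0_\alpha(v,\hat{G}^0_\alpha f)=\int_U fv\,dx$ for all $v\in H^{1,2}_0(U)$ and $f\in L^2(U)$. For any $f\in L^\infty(U)$, the boundedness of $U$ gives $f\in L^2(U)$, and the sub-Markov property of $\hat{G}^0_\alpha$ yields $\|\hat{G}^0_\alpha f\|_{L^\infty(U)}\le\alpha^{-1}\|f\|_{L^\infty(U)}$; since $\hat{G}^0_\alpha f\in H^{1,2}_0(U)$, this makes $\hat{G}^0_\alpha f\in H^{1,2}_0(U)_b$ an admissible test function in the hypothesis on $u$. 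Substituting $v=\hat{G}^0_\alpha f$ and combining the two form identities gives
$$
\int_U uf\,dx=\mathcal{E}^0_\alpha(u,\hat{G}^0_\alpha f)=\mathcal{E}^0(u,\hat{G}^0_\alpha f)+\alpha\int_U u\,\hat{G}^0_\alpha f\,dx=\int_U(\alpha u-w)\,\hat{G}^0_\alpha f\,dx.
$$

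The final step is to recast the right-hand side via the duality
$$
\int_U g\,\hat{G}^0_\alpha f\,dx=\int_U\overline{G}^0_\alpha g\cdot f\,dx,\qquad g\in L^1(U),\ f\in L^\infty(U),
$$
applied to $g=\alpha u-w$. Since $f\in L^\infty(U)$ is arbitrary and both $u$ and $\overline{G}^0_\alpha(\alpha u-w)$ lie in $L^1(U)$, the desired resolvent identity follows. The main technical point is precisely this duality: the $L^2$-adjoint relation $\int_U(G^0_\alpha g)f\,dx=\int_U g\,\hat{G}^0_\alpha f\,dx$ for $g,f\in L^2(U)$ must be upgraded to the pairing of the $L^1$-extension $\overline{G}^0_\alpha$ with the $L^\infty$-restriction of $\hat{G}^0_\alpha$. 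This is done by approximating $\alpha u-w$ in $L^1$-norm by a sequence in $L^2(U)$ and passing to the limit, using the $L^1$-contractivity of $\overline{G}^0_\alpha$ from Proposition \ref{prop1.1} together with the uniform $L^\infty$-bound on $\hat{G}^0_\alpha f$.
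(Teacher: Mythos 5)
Your proposal is correct and follows essentially the same route as the paper: test the hypothesis with the coresolvent $\hat{G}^0_\alpha f$ (the paper takes $\alpha=1$ and $f=\varphi\in C_0^{\infty}(U)$), use $\mathcal{E}^0_\alpha(u,\hat{G}^0_\alpha f)=\int_U uf\,dx$ together with the $L^1$--$L^\infty$ duality between $\overline{G}^0_\alpha$ and $\hat{G}^0_\alpha$, and conclude $u=\overline{G}^0_\alpha(\alpha u-w)$, hence $u\in D(\overline{L}^0)_b$ with $\overline{L}^0u=w$. Your explicit approximation argument for upgrading the $L^2$-adjoint relation to the $L^1$--$L^\infty$ pairing is exactly what the paper leaves implicit in its citation of \cite[I. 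Theorem 2.8]{MR}.
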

\begin{proof}
Let $(\hat{\mathcal{E}}^0, D(\hat{\mathcal{E}}^0))$ be the Dirichlet form as in the proof of Proposition \ref{prop1.1} and denote its corresponding sub-Markovian $C_0$-resolvent of contractions on $L^2(U)$ by $(\hat{G}_{\alpha})_{\alpha>0}$. Let $\varphi \in C_0^{\infty}(U)$. Then, by the condition and \cite[I. Theorem 2.8]{MR},
$$
-\int_{U} \varphi \overline{G}^0_1 w dx = -\int_{U} w \hat{G}^0_1 \varphi dx = \mathcal{E}^0(u, \hat{G}^0_1 \varphi)  = \int_{U} u(\varphi  -  \hat{G}^0_1 \varphi) dx = \int_{U} (u-\overline{G}^0_1u) \varphi dx.
$$
Thus, $u =\overline{G}^0_1(u-w) \in D(\overline{L})_b$ and $(1-\overline{L}^0)u = u-w$, as desired.
\end{proof}

\begin{lemma} \label{lem1.1}
Assume that $U$ is a bounded open subset of $\mathbb{R}^d$ and \eqref{assump} holds. Then the following hold:
\begin{itemize}
\item[(i)]
$D(\overline{L}^0)_b \subset H^{1,2}_0(U)$ and $\lim_{\alpha \rightarrow \infty} \alpha G^0_{\alpha} u = u$ in $H^{1,2}_0(U)$ for any $u \in D(\overline{L}^0)_b$. Moreover, for all $u \in D(\overline{L}^0)_b$ and $v \in H^{1,2}_0(U)_b$, $\mathcal{E}^0(u,v) = -\int_{U}  \overline{L}^{0} u  \cdot v \,dx$.
\item[(ii)]
If $\psi \in C^2(\mathbb{R})$ with $\psi(0)=0$ and $u \in D(\overline{L}^0)_b$, then $\psi (u) \in D(\overline{L}^0)_b$ and
\begin{equation} \label{(1.1)}
\overline{L}^0  \psi(u) = \psi'(u) \,\overline{L}^0 u + \psi''(u) \langle A \nabla u, \nabla u \rangle. 
\end{equation}
\end{itemize}
\end{lemma}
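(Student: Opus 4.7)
My plan for (i) is to approximate $u \in D(\overline{L}^0)_b$ by the Yosida-type family $u_\alpha := \alpha G^0_\alpha u = \alpha \overline{G}^0_\alpha u$ (the two agree because $U$ is bounded and $u \in L^\infty(U)$, so $u \in L^2(U)$). Each $u_\alpha$ lies in $D(L^0) \subset H^{1,2}_0(U)$, the sub-Markovian property gives $\|u_\alpha\|_{L^\infty(U)} \leq \|u\|_{L^\infty(U)}$, and strong continuity of $(G^0_\alpha)_{\alpha>0}$ on $L^2(U)$ yields $u_\alpha \to u$ in $L^2(U)$. The resolvent equation rearranges to the key identities $u_\alpha - u = \overline{G}^0_\alpha \overline{L}^0 u$ and $L^0 u_\alpha = \alpha(u_\alpha - u) = \alpha \overline{G}^0_\alpha \overline{L}^0 u$. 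Using the standard integration-by-parts identity on $D(L^0) \times H^{1,2}_0(U)$ together with $L^1$-contractivity $\|\alpha \overline{G}^0_\alpha f\|_{L^1(U)} \leq \|f\|_{L^1(U)}$, I obtain
\[
\mathcal{E}^0(u_\alpha, u_\alpha) = -\int_U L^0 u_\alpha \cdot u_\alpha \, dx \leq \|u\|_{L^\infty(U)}\,\|\overline{L}^0 u\|_{L^1(U)},
\]
a bound uniform in $\alpha$. Coercivity of $(\mathcal{E}^0, D(\mathcal{E}^0))$ then gives a uniform $H^{1,2}_0$-bound on $(u_\alpha)$, so weak compactness combined with the $L^2$-convergence yields $u \in H^{1,2}_0(U)$ with $u_\alpha \rightharpoonup u$ in $H^{1,2}_0(U)$.

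For the integration-by-parts formula, I pass to the limit in $\mathcal{E}^0(u_\alpha, v) = -\int_U \alpha \overline{G}^0_\alpha \overline{L}^0 u \cdot v \, dx$ for $v \in H^{1,2}_0(U)_b$: the left side tends to $\mathcal{E}^0(u, v)$ by weak convergence, and the right side tends to $-\int_U \overline{L}^0 u \cdot v \, dx$ because $\alpha \overline{G}^0_\alpha \overline{L}^0 u \to \overline{L}^0 u$ in $L^1(U)$ (strong continuity on $L^1(U)$) and $v \in L^\infty(U)$. To upgrade weak to strong $H^{1,2}_0$-convergence I decompose $\mathcal{E}^0 = \widetilde{\mathcal{E}}^0 + \check{\mathcal{E}}^0$ into its symmetric and antisymmetric parts: the antisymmetric part vanishes on the diagonal, so $\widetilde{\mathcal{E}}^0(u_\alpha, u_\alpha) = \mathcal{E}^0(u_\alpha, u_\alpha)$, and $\widetilde{\mathcal{E}}^0_1(\cdot,\cdot)^{1/2}$ is equivalent to $\|\cdot\|_{H^{1,2}_0(U)}$. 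Testing the integration-by-parts formula with $v = u$ gives $\mathcal{E}^0(u, u) = -\int_U \overline{L}^0 u \cdot u \, dx$, and a cross-term split
\[
\int_U \alpha\overline{G}^0_\alpha\overline{L}^0 u \cdot u_\alpha \, dx = \int_U (\alpha\overline{G}^0_\alpha\overline{L}^0 u - \overline{L}^0 u)\,u_\alpha\, dx + \int_U \overline{L}^0 u\,(u_\alpha - u)\, dx
\]
(bounding the first piece by $\|u\|_{L^\infty(U)}\|\alpha\overline{G}^0_\alpha\overline{L}^0 u - \overline{L}^0 u\|_{L^1(U)}$ and the second by approximating $\overline{L}^0 u$ by bounded functions while invoking $L^1$-convergence $u_\alpha \to u$) will yield $\mathcal{E}^0(u_\alpha, u_\alpha) \to \mathcal{E}^0(u, u)$. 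Convergence of the symmetric-part norm together with weak convergence then forces strong convergence in $H^{1,2}_0(U)$.

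Part (ii) should flow directly from (i) together with Lemma \ref{boleulem}. By (i), $u \in H^{1,2}_0(U) \cap L^\infty(U)$, so the Sobolev chain rule (with $\psi(0)=0$ preserving the zero trace and $\psi'$ bounded on the range of $u$) places $\psi(u) \in H^{1,2}_0(U)_b$ and $\psi'(u)v \in H^{1,2}_0(U)_b$ for every $v \in H^{1,2}_0(U)_b$. The chain-rule computation
\[
\mathcal{E}^0(\psi(u), v) = \int_U \psi'(u)\langle A\nabla u, \nabla v\rangle\, dx = \mathcal{E}^0(u, \psi'(u)v) - \int_U \psi''(u)\,v\,\langle A\nabla u, \nabla u\rangle\, dx,
\]
combined with the integration-by-parts formula from (i) applied to $\mathcal{E}^0(u, \psi'(u)v)$, produces $\mathcal{E}^0(\psi(u), v) = -\int_U w\, v\, dx$ for $w := \psi'(u)\overline{L}^0 u + \psi''(u)\langle A\nabla u, \nabla u\rangle$; the function $w$ lies in $L^1(U)$ since $\psi'(u),\psi''(u) \in L^\infty(U)$, $\overline{L}^0 u \in L^1(U)$, and $\langle A\nabla u, \nabla u\rangle \in L^1(U)$. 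Lemma \ref{boleulem} then concludes $\psi(u) \in D(\overline{L}^0)_b$ and identifies $\overline{L}^0\psi(u)$ exactly as \eqref{(1.1)} asserts. The main obstacle I anticipate is the strong $H^{1,2}_0$-convergence step in (i): while the uniform bound and weak convergence are clean, upgrading to strong convergence forces a careful treatment of the asymmetric mismatch between the $L^1$-convergence available for $\alpha\overline{G}^0_\alpha\overline{L}^0 u$ and the $L^2$-convergence available for $u_\alpha$, which is precisely why the symmetric-part decomposition and the bounded-approximation splitting above are unavoidable.
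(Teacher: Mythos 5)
Your proposal is correct, and part (ii) follows the paper's route almost verbatim (chain rule, the identity $\mathcal{E}^0(\psi(u),v)=\mathcal{E}^0(u,\psi'(u)v)-\int_U\psi''(u)v\langle A\nabla u,\nabla u\rangle\,dx$, the formula from (i), then Lemma \ref{boleulem}); the paper merely spells out the smooth, uniformly bounded approximations of $u$ and $v$ that justify $v\psi'(u)\in H^{1,2}_0(U)_b$, which you invoke as the standard chain/product rule. The genuine difference is in how you obtain the strong $H^{1,2}_0(U)$-convergence of $u_\alpha=\alpha G^0_\alpha u$ in (i). The paper applies the very same $L^\infty$--$L^1$ duality bound, but to differences: using $L^0 u_\alpha=\alpha\overline{G}^0_\alpha\overline{L}^0u$ it gets
$\mathcal{E}^0(u_\alpha-u_\beta,u_\alpha-u_\beta)\le 2\|u\|_{L^\infty(U)}\|\alpha\overline{G}^0_\alpha\overline{L}^0u-\beta\overline{G}^0_\beta\overline{L}^0u\|_{L^1(U)}\to 0$,
so $(u_\alpha)$ is Cauchy in $H^{1,2}_0(U)$ and strong convergence (hence $u\in H^{1,2}_0(U)$ and the integration-by-parts identity) follows in one stroke. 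You instead derive a uniform energy bound, extract a weak $H^{1,2}_0$-limit, prove the identity $\mathcal{E}^0(u,v)=-\int_U\overline{L}^0u\cdot v\,dx$ by weak convergence on the left and $L^1$-convergence on the right, and then upgrade to strong convergence via convergence of the symmetric-part energies (a Radon--Riesz argument). This is valid — the antisymmetric part does vanish on the diagonal and $\widetilde{\mathcal{E}}^0_1$ is an equivalent Hilbert norm — but it is a longer detour for the same conclusion; what the paper's Cauchy trick buys you is that you never need weak compactness, the symmetric/antisymmetric decomposition, or the bounded-approximation splitting of $\overline{L}^0u$. One small slip to fix if you keep your route: your displayed cross-term split omits the term $\int_U\overline{L}^0u\cdot u\,dx$ on the right-hand side (as written the identity is false); the intended decomposition is $\int_U\alpha\overline{G}^0_\alpha\overline{L}^0u\cdot u_\alpha\,dx-\int_U\overline{L}^0u\cdot u\,dx=\int_U(\alpha\overline{G}^0_\alpha\overline{L}^0u-\overline{L}^0u)u_\alpha\,dx+\int_U\overline{L}^0u\,(u_\alpha-u)\,dx$, after which your two estimates apply as stated.
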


\begin{proof}
(i) Let $u \in D(\overline{L}^0)_b$. Since $u \in L^{\infty}(U)$, it holds $\overline{G}^0_{\alpha} u=G^{0}_{\alpha} u  \in D(L^0)$ for any $\alpha>0$. And since $(L^0, D(L^0)) \subset (\overline{L}^0, D(\overline{L}^0))$ by Proposition \ref{prop1.1}, it follows that  $L^0 G_{\alpha}^0 u =\overline{L}^0 G_{\alpha}^0 u = \overline{L}^0 \overline{G}_{\alpha}^0 u = \overline{G}_{\alpha}^0 \overline{L}^0  u$ for any $\alpha>0$. Moreover, it follows from \cite[Corollary 2.10]{MR}, the sub-Markovian property of $(G^0_{\alpha})_{\alpha>0}$ and the $L^1(U)$-strong continuity of  $(\overline{G}^0_{\alpha})_{\alpha>0}$
 that
\begin{align*}
&\mathcal{E}^0 (\alpha G^0_{\alpha} u - \beta G^0_{\beta} u, \alpha G^0_{\alpha} u - \beta G^0_{\beta} u ) = - \int_{U} L^0 \big( \alpha G^0_{\alpha} u - \beta G^0_{\beta} u \big) \cdot (\alpha G^0_{\alpha} u - \beta G^0_{\beta} u) dx \\
& \qquad =  - \int_{U} \big( \alpha \overline{G}_{\alpha}^0 \overline{L}^0  u- \beta \overline{G}_{\alpha}^0 \overline{L}^0  u \big) \cdot (\alpha G^0_{\alpha} u - \beta G^0_{\beta} u) dx \\
&\qquad \leq  2 \|u\|_{L^{\infty}(U)} \| \alpha \overline{G}_{\alpha}^0 \overline{L}^0  u- \beta \overline{G}_{\alpha}^0 \overline{L}^0  u \|_{L^1(U)} \rightarrow 0  \text{ as }\alpha, \beta \rightarrow \infty.
\end{align*}
Thus, $(\alpha G^0_{\alpha} u)_{\alpha>0}$ is a Cauchy sequence in $H_0^{1,2}(U)$, and hence  by the $L^2(U)$-strong continuity of $(G^0_{\alpha})_{\alpha>0}$, we obtain $u \in H_0^{1,2}(U)$ and $\lim_{\alpha \rightarrow \infty}\alpha G^0_{\alpha} u = u$ in $H^{1,2}_0(U)$.  Moreover,  using \cite[I. Corollary 2.10]{MR} and the $L^1(U)$-strong continuity of $(\overline{G}^0_{\alpha})_{\alpha>0}$, for any $u \in D(\overline{L}^0)_b$ and $v \in H^{1,2}_0(U)_b$,
\begin{align*}
\mathcal{E}^0(u,v)  &=\lim_{\alpha \rightarrow \infty} \mathcal{E}^0 (\alpha G^0_{\alpha} u, v) = \lim_{\alpha \rightarrow \infty}  -\int_{U} L^0 \big( \alpha G^0_{\alpha} u  \big) \cdot v dx \\
&= \lim_{\alpha \rightarrow \infty} -\int_{U}  \big( \alpha \overline{G}^0_{\alpha} \overline{L}^0 u  \big) \cdot v dx  = -\int_{U} \overline{L}^0 u \cdot v dx. 
\end{align*}
(ii) Since $u \in H^{1,2}_0(U)$ by (i), there exists a sequence of functions $(\widetilde{u}_n)_{n \geq 1}$ in $C_0^{\infty}(U)$ such that $\lim_{n \rightarrow \infty}\widetilde{u}_n=u$ in $H^{1,2}_0(U)$ and $\lim_{n \rightarrow \infty} \widetilde{u}_n =u$ a.e. Let $\phi \in C^{\infty}(\mathbb{R})_b$ be such that $\phi(t)=t$ for all $t \in [-\|u\|_{L^{\infty}(U)}-1, \|u\|_{L^{\infty}(U)}+1]$. Let $M_1=\|\phi\|_{L^{\infty}(\mathbb{R})}$ and $u_n=\phi(\widetilde{u}_n)$, $n \geq 1$. Then, $u_n \in C_0^{\infty}(U)$, $\|u_n\|_{L^{\infty}(U)} \leq M_1$  for all $n  \geq 1$ and $\lim_{n \rightarrow \infty} u_n =u$ a.e. Using the chain rule and Lebesgue's theorem, $\lim_{n \rightarrow \infty}u_n=u$ in $H^{1,2}_0(U)$. By the chain rule, $\psi (u) \in H_0^{1,2}(U)_b$ and $\nabla \psi (u) = \psi'(u) \nabla u$. Let $v \in H^{1,2}_0(U)_b$. As above, there exist a sequence of functions $(v_n)_{n \geq 1}$ in $C_0^{\infty}(U)$ and a constant  $M_2>0$ such that
$\|v_n\|_{L^{\infty}(U)} \leq M_2$  for all $n \geq  1$
and that $\lim_{n \rightarrow \infty}v_n=v$ in $H^{1,2}_0(U)$ and $\lim_{n \rightarrow \infty} v_n =v$ a.e.
Thus, $v_n \psi'(u_n) \in C_0^1(U)$ for each $n \geq 1$, $v \psi'(u) \in H^{1,2}_0(U)$ and $\lim_{n \rightarrow \infty} v_n \psi'(u_n) = v \psi'(u)$ in $H^{1,2}_0(U)$. Indeed, $\nabla (v \psi'(u)) = \psi'(u)\nabla v + v  \psi''(u) \nabla u$.
Using (i), we obtain that for each $v \in H^{1,2}_0(U)_b$, 
$\mathcal{E}^0(\psi(u), v) 
= -\int_{U} w v dx$, where $w= \overline{L}^0 u \cdot \psi' (u) + \psi''(u) \langle A \nabla u, \nabla u \rangle \in L^1(U)$ (see \cite[Proof of Lemma 1.2(iv)]{S99}). Thus, by Lemma \ref{boleulem}, $\psi(u) \in D(\overline{L})_b$ and \eqref{(1.1)} holds.
\end{proof}

\section{Constructing a sub-Markovian $C_0$-resolvent of contractions}\label{sec3}

\begin{theorem} \label{theo1.2}
Assume {\bf (A)}. Let
\begin{equation} \label{givenop}
L u := L^0 u - \langle B, \nabla u \rangle -cu, \quad \;  u\in D(L^0)_b,
\end{equation}
where $(L^0, D(L^0))$ is the generator as in Section \ref{sec2}. Then, the following hold:
\begin{itemize}
\item[(i)]
The operator $(L, D(L^0)_b)$ is densely defined on $L^1(U)$ and dissipative, hence closable.

\item[(ii)]
	The closure $(\overline{L}, D(\overline{L}))$ of $(L, D(L^0)_b)$ on $L^1(U)$ generates a $C_0$-resolvent of contractions $(\overline{G}_{\alpha})_{\alpha>0}$
and a $C_0$-semigroup of contractions $(\overline{T}_t)_{t>0}$ on $L^1(U)$.

\item[(iii)]
$(\overline{G}_{\alpha})_{\alpha>0}$ and $(\overline{T}_t)_{t>0}$  are sub-Markovian, i.e. for each $\alpha>0$ and $t>0$
$$
0 \leq \alpha \overline{G}_{\alpha}f \leq 1, \quad 0 \leq \overline{T}_t f \leq 1, \quad \text{ for any $f \in L^1(U)$ with $0 \leq f \leq 1$}.
$$
\end{itemize}
\end{theorem}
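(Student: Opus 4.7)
My plan is to address the three parts in order, with a single convex smooth approximation of $t\mapsto |t|$ serving as the key computational tool for (i), and with the range-density step of (ii) as the principal obstacle.

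For (i), denseness of $D(L^0)_b$ in $L^1(U)$ follows from sub-Markovianity of $(G^0_\alpha)_{\alpha>0}$: truncating $f \in L^1(U)$ to $f_n \in L^\infty(U)$ and using that $\alpha G^0_\alpha f_n \in D(L^0)_b$ (sub-Markov preserves boundedness) together with $\alpha \overline{G}^0_\alpha f_n \to f_n$ in $L^1(U)$ by Proposition \ref{prop1.1}. Well-definedness of $L$ on $D(L^0)_b$ uses Lemma \ref{lem1.1}(i) to embed $D(L^0)_b \subset H^{1,2}_0(U) \cap L^\infty(U)$: then $B \in L^2$ gives $\langle B, \nabla u\rangle \in L^1$, $c \in L^{2_*}(U) \subset L^1(U)$ with $u$ bounded gives $cu \in L^1$, and $L^0 u \in L^2(U) \subset L^1(U)$. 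For the $L^1$-dissipativity, set $\psi_\epsilon(t) := \sqrt{t^2+\epsilon^2} - \epsilon \in C^2(\mathbb{R})$, so $\psi_\epsilon(0)=0$, $|\psi'_\epsilon|\leq 1$, $\psi''_\epsilon \geq 0$, $\psi_\epsilon \nearrow |t|$, $\psi'_\epsilon \to \mathrm{sgn}$. By Lemma \ref{lem1.1}(ii), $\psi_\epsilon(u) \in D(\overline{L}^0)_b$ and
\begin{equation*}
\overline{L}^0 \psi_\epsilon(u) = \psi'_\epsilon(u)\, L^0 u + \psi''_\epsilon(u) \langle A\nabla u, \nabla u\rangle.
\end{equation*}
Since $(\overline{T}^0_t)$ preserves positivity and is $L^1$-contractive, $\int_U \overline{L}^0 \varphi\, dx \leq 0$ for every $\varphi \in D(\overline{L}^0)_+$; applied to $\varphi = \psi_\epsilon(u) \geq 0$ and combined with \eqref{assump}, this yields $\int_U \psi'_\epsilon(u) L^0 u\, dx \leq 0$. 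The drift contribution equals $\int_U \langle B, \nabla u\rangle \psi'_\epsilon(u)\, dx = \int_U \langle B, \nabla \psi_\epsilon(u)\rangle\, dx \geq 0$, where \eqref{weakdivne} has been extended by density to the nonnegative test function $\psi_\epsilon(u) \in H^{1,2}_0(U)_b$, and the zero-order term $\int_U cu \psi'_\epsilon(u)\, dx \geq 0$ because $c, u\psi'_\epsilon(u) \geq 0$. Hence $\int_U Lu\cdot \psi'_\epsilon(u)\, dx \leq 0$, so
\begin{equation*}
\|(\alpha - L)u\|_{L^1(U)} \;\geq\; \alpha \int_U u\psi'_\epsilon(u)\, dx \;\longrightarrow\; \alpha\|u\|_{L^1(U)} \quad\text{as } \epsilon \downarrow 0
\end{equation*}
by dominated convergence, giving dissipativity and hence closability.

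For (ii), by the Lumer-Phillips theorem it suffices to show that $(\alpha_0 - L)(D(L^0)_b)$ is dense in $L^1(U)$ for some $\alpha_0 > 0$. Given $g \in L^\infty(U)$ (a dense subset of $L^1(U)$ since $|U|<\infty$), I would approximate the singular coefficients by bounded ones: $B^{(n)} \in L^\infty(U,\mathbb{R}^d)$ with $\mathrm{div}\, B^{(n)} \leq 0$ and $B^{(n)} \to B$ in $L^2$, and $c^{(n)} \in L^\infty(U)$ with $c^{(n)} \geq 0$ and $c^{(n)} \to c$ in $L^{2_*}$. For $\alpha_0$ large, Lax-Milgram applied to the regularized sectorial form on $H^{1,2}_0(U) \cap L^\infty(U)$, coercive through the identity $\tfrac12\int \langle B^{(n)}, \nabla u^2\rangle\, dx \geq 0$ coming from $\mathrm{div}\, B^{(n)}\leq 0$, produces $u_n \in H^{1,2}_0(U)$ solving $(\alpha_0 - L^{(n)})u_n = g$. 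Moser iteration yields a uniform $L^\infty$-bound depending only on $\|g\|_{L^\infty}$, $\|B^{(n)}\|_{L^2}$ and $\|c^{(n)}\|_{L^{2_*}}$ (using $\mathrm{div}\, B^{(n)}\leq 0$ and $c^{(n)}\geq 0$), and with bounded regularized coefficients the identity $-L^0 u_n = -\alpha_0 u_n - \langle B^{(n)}, \nabla u_n\rangle - c^{(n)} u_n + g \in L^2(U)$ places $u_n \in D(L^0)_b$. Uniform $H^{1,2}_0$-energy bounds (from testing the equation with $u_n$ and using the nonnegativity of the drift and potential contributions) then give, via Hölder,
\begin{equation*}
(\alpha_0 - L)u_n \;=\; g + \langle B - B^{(n)}, \nabla u_n\rangle + (c - c^{(n)}) u_n \;\longrightarrow\; g \quad\text{in } L^1(U),
\end{equation*}
proving that $g$ belongs to the $L^1$-closure of the range. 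Lumer-Phillips then produces $(\overline{L}, D(\overline{L}))$ generating the claimed $C_0$-resolvent and $C_0$-semigroup of contractions on $L^1(U)$.

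For (iii), positivity $\alpha \overline{G}_\alpha f \geq 0$ when $f \geq 0$ is obtained by applying the $\psi_\epsilon$-computation of (i) to $u^-$ where $u = \overline{G}_\alpha f$, concluding $\|u^-\|_{L^1(U)} = 0$; the upper bound $\alpha \overline{G}_\alpha f \leq 1$ when $f\leq 1$ is obtained by applying it to $(\alpha u - 1)^+$. At the level of the regularized $u_n$ constructed in (ii) both reduce to standard weak maximum-principle arguments (classical because of bounded coefficients), and sub-Markov passes through the $L^1$-limit since a.e.-convergence along a subsequence preserves pointwise bounds. The principal obstacle in this program is the range-density step of (ii): because $B$ is only $L^2$, the perturbed sectorial form is not directly coercive on $H^{1,2}_0(U)$, and one must regularize $B$ to the bounded regime where $\tfrac12\int \langle B^{(n)}, \nabla u^2\rangle\, dx \geq 0$ gives coercivity on $H^{1,2}_0(U)\cap L^\infty(U)$, then transfer solvability to the singular problem by coupling a uniform $L^\infty$-Moser bound with the dissipativity of (i) to control the error terms in $L^1(U)$.
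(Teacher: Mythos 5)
Your part (i) is essentially sound: you prove the norm inequality $\|(\alpha-L)u\|_{L^1(U)}\ge \alpha\|u\|_{L^1(U)}$ using the smooth convex approximation $\psi_\epsilon(t)=\sqrt{t^2+\epsilon^2}-\epsilon$ together with Lemma \ref{lem1.1}(ii), the $L^1$-contractivity of $(\overline{T}^0_t)_{t>0}$, \eqref{weakdivne} and $c\ge 0$; this is the standard equivalent criterion for dissipativity and is a legitimate (slightly different, arguably cleaner) variant of the paper's argument, which instead derives $\int_U Lu\cdot\big(1_{(0,\infty)}(u)-1_{(-\infty,0)}(u)\big)\,dx\le 0$ through indicator approximations and invokes the duality-map definition. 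The extension of \eqref{weakdivne} to nonnegative functions in $H^{1,2}_0(U)_b$ that you use is the same density step the paper uses implicitly, and is unproblematic.

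The genuine gap is in part (ii), the range-density step. Your plan requires bounded vector fields $B^{(n)}\in L^\infty(U,\mathbb{R}^d)$ with ${\rm div}\,B^{(n)}\le 0$ \emph{weakly in all of $U$} and $B^{(n)}\to B$ in $L^2(U,\mathbb{R}^d)$: the sign condition on all of $U$ is exactly what you need both for the coercivity identity $\tfrac12\int_U\langle B^{(n)},\nabla u^2\rangle\,dx\ge 0$ and for the uniform Moser bound. You give no construction of such $B^{(n)}$, and the obvious candidate fails: mollifying the zero extension of $B$ only yields ${\rm div}(\hat B*\eta_\epsilon)\le 0$ weakly on the shrunken domain $U_\epsilon$, because for test functions supported near $\partial U$ the convolved test function leaves $U$ and picks up an uncontrolled boundary contribution. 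This is precisely why the paper's existence proof (Theorem \ref{theo2.6}(iii)) solves the regularized problems on $U_{\delta/2n}$ and passes to a weak limit — a device that produces weak solutions but does \emph{not} produce elements of $D(L^0)_b$ (the generator attached to $H^{1,2}_0(U)$), which is what your range-density argument needs; the zero extension of a solution on $U_\epsilon$ satisfies the generator identity only against test functions in $H^{1,2}_0(U_\epsilon)$. For a general $B\in L^2(U,\mathbb{R}^d)$ with ${\rm div}\,B\le 0$ on an arbitrary bounded open $U$, the existence of bounded approximations preserving the global sign of the divergence is a substantial unproven claim. The paper avoids this entirely: it proves density of $(1-L)(D(L^0)_b)$ by a duality argument — take $h\in L^\infty(U)$ annihilating the range, use Lax--Milgram on $\mathcal{E}^0_1$ to identify $h$ with an element of $H^{1,2}_0(U)_b$, and then show $\mathcal{E}^0_1(h,h)\le 0$ via $nG^0_nh\to h$ in $H^{1,2}_0(U)$, using \eqref{weakdivne} and $c\ge 0$ — so no coefficient regularization is needed. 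Your part (iii) is also affected insofar as it leans on the regularized solutions from (ii); the alternative route you sketch (applying the $\psi_\epsilon$-computation directly to $\overline{G}_\alpha f$) needs an additional approximation through $D(L^0)_b$ with control of the energy term (in the spirit of Lemma \ref{lem1.3}), since $\overline{G}_\alpha f$ lies only in $D(\overline{L})$ and $u^-$ is not in the domain; the paper simply invokes Stannat's argument for this step.
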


\begin{proof}
(i) First note that \eqref{givenop} is well-defined due to the inclusion $D(L^0) \subset D(\overline{L}^0) \subset H^{1,2}_0(U)_b$ by Proposition \ref{prop1.1} and Lemma \ref{lem1.1}(i).
Let $f \in L^{\infty}(U)$. Then $n G^0_n f \in D(L^0)_b$  for each $n \in \mathbb{N}$ by the sub-Markovian property of $(G^0_{\alpha})_{\alpha>0}$. By the strong continuity of  $(G^0_{\alpha})_{\alpha>0}$ on $L^1(U)$, $\lim_{n \rightarrow \infty}n G^0_n f = f$ in $L^1(U)$. Since $L^{\infty}(U)$ is dense in $L^1(U)$, $D(L^0)_b$ is dense in $L^1(U)$. 
To show $(L, D(L^0)_b)$ is dissipative on $L^1(U)$, we will first show the following claim:
\begin{equation} \label{(1.2)}
\int_U L u \cdot 1_{(1, \infty)}(u) \,dx \leq 0   \;\; \text{ for any } u \in D(L^0)_b.
\end{equation}
To show the claim, let $u \in D(L^0)_b$. For each $\varepsilon \in (0,1)$ choose a function $\eta_{\varepsilon} \in C^{\infty}(\mathbb{R})$ satisfying that $\eta_{\varepsilon}(t) \in [0,1]$, $\eta_{\varepsilon}(t)=0$ for all $t \leq 1$ and $\eta_{\varepsilon}(t)=1$ for all $t  \geq 1+ \varepsilon$. 
For each $\varepsilon \in (0,1)$, let
$\psi_{\varepsilon}(t)=\int_{-\infty}^t \eta_{\varepsilon}(s) ds$, $t \in \mathbb{R}$.
Then, $\psi_{\varepsilon} \in C^{\infty}(\mathbb{R})$ with $\psi_{\varepsilon}(t) \geq 0$ for all $t \in \mathbb{R}$ and $\psi_{\varepsilon}(0)=0$ for all $t \in (-\infty, 0]$ and $\varepsilon \in (0,1)$. Hence by Lemma \ref{lem1.1}(ii), we get $\psi_{\varepsilon}(u) \in D(\overline{L}^0)$ and
\begin{align*}
\int_{U} L^0 u \cdot \psi_{\varepsilon}' (u) \, dx &\leq \int_U L^0 u \cdot \psi'_{\varepsilon}(u) dx +\int_{U} \psi_{\varepsilon}''(u) \langle A \nabla u, \nabla u \rangle dx  \nonumber \\
& = \int_{U} \overline{L}^0 \psi_{\varepsilon}(u) dx = \lim_{t \rightarrow 0+} \int_{U} \frac{\overline{T}^0_t \psi_{\varepsilon}(u) - \psi_{\varepsilon}(u)}{t} dx \leq 0 \label{(1.3)}
\end{align*}
for all $\varepsilon \in (0,1)$, where the last inequality follows from the $L^1$-contraction property of $(\overline{T}^0_t)_{t>0}$.
Note that $\lim_{\varepsilon \rightarrow 0+} \psi'_{\varepsilon}(t) = 1_{(1, \infty) }(t)$ for each $t>0$ and $\|\psi'_{\varepsilon}(u)\|_{L^{\infty}(U)} \leq 1$ for all $\varepsilon \in (0,1)$.
By \eqref{weakdivne},
$
-\int_{U} \langle B, \nabla u \rangle  \psi_{\varepsilon}'(u) \, dx =-\int_{U} \langle B, \nabla \psi_{\varepsilon}(u) \rangle dx \leq 0.
$ 
And it holds that $-\int_{U} c u \psi_{\varepsilon}'(u)  \,dx \leq 0$.
Therefore,
\begin{equation} \label{eq1.4}
\int_U L u \cdot \psi_{\varepsilon}'(u) \,dx = \int_{U} L^0 u \cdot \psi_{\varepsilon}'(u) - \langle B, \nabla u \rangle \psi_{\varepsilon}'(u) - c u \psi_{\varepsilon}'(u)\,dx  \leq 0.
\end{equation}
Letting $\varepsilon \rightarrow 0+$ in \eqref{eq1.4}, the claim follows from Lebesgue's theorem. \\
For each $n \in \mathbb{N}$, replacing $u \in D(L^0)_b$ by $n u \in D(L^0)_b$ in \eqref{(1.2)}, we have
$$
n \int_U L u \cdot 1_{(\frac{1}{n}, \infty)}(u) \,dx = \int_U L (nu) \cdot 1_{(1, \infty)}(nu) \,dx \leq 0 \quad \text{ for all $u \in D(L^0)_b$}.
$$
Dividing the above by $n$ and letting $n \rightarrow \infty$, we obtain from Lebesgue's theorem 
\begin{equation} \label{eq1.5}
\int_U L u \cdot 1_{(0, \infty)}(u) \,dx \leq 0 \quad \text{ for all $u \in D(L^0)_b$}.
\end{equation}
Replacing $u \in D(L^0)_b$ by $-u \in D(L^0)_b$ in \eqref{eq1.5}, it follows that
$$
-\int_U L u \cdot 1_{(-\infty, 0)}(u) \,dx = \int_U L (-u) \cdot 1_{(0, \infty)}(-u) \,dx \leq 0 \quad \text{ for all $u \in D(L^0)_b$},
$$
and hence $\int_U L u \cdot \big( 1_{(0, \infty)}(u) - 1_{(-\infty, 0)}(u) \big) \,dx \leq 0$ for all $u \in D(L^0)_b$. Since $L^1(U)'$ is identified as $L^{\infty}(U)$ and $ 1_{(0, \infty)}(u) - 1_{(-\infty, 0)}(u) \in L^{\infty}(U)$ satisfies  $\| 1_{(0, \infty)}(u) - 1_{(-\infty, 0)}(u) \|_{L^{\infty}(U)}=1$ and
$\int_{U}  u\big(  1_{(0, \infty)}(u) - 1_{(-\infty, 0)}(u) \big)\, dx  = \int_{U} u^+ + u^-\, dx =\|u\|_{L^1(U)}$, it follows that $(L, D(L^0)_b)$ is dissipative (see \cite[Definition 3.4.1]{AB11}). \\
(ii) Note that the closure $(\overline{L}, D(\overline{L}))$ of $(L, D(L^0)_b)$ on $L^1(U)$ is also dissipative by \cite[Lemma 3.4.4]{AB11}. To show assertion (ii), it is enough to show by the Lumer-Phillips Theorem (\cite[Theorem 3.4.5]{AB11}) that $(1-\overline{L})(D(\overline{L})) =L^1(U)$. Since $(1-\overline{L})(D(\overline{L}))$ is the closure of $(1-L)(D(L^0)_b)$ in $L^1(U)$, it is now enough to show the claim that $(1-L)(D(L^0)_b)$ is  dense in  $L^1(U)$. 
Let $h \in L^{\infty}(U)$ satisfy  that 
\begin{equation} \label{1.5eq}
\int_{U} (1-L) u \cdot h dx =0 \text{ for all $u \in D(L^0)_b$. }
\end{equation}
Then, to show the claim, it suffices to show that $h=0$ 
by a consequence of the Hahn-Banach theorem (see \cite[Proposition 1.9]{B11}). Note that the map, $\mathcal{T}: H^{1,2}_0(U) \rightarrow \mathbb{R}$ defined by $\mathcal{T}(u) := \int_{U} \langle -B, \nabla u\rangle h  dx - \int_{U} cuh dx$, $u \in H^{1,2}_0(U)$ is continuous with respect to the norm $\|\cdot \|_{H^{1,2}_0(U)}$. Thus, it follows from the Lax-Milgram theorem (\cite[Corollary 5.8]{B11}) that there exists $v \in H^{1,2}_0(U)$ such that
$\mathcal{E}_1^0(u,v)  = \mathcal{T}(u)$ for all $u \in H^{1,2}_0(U)$.
Since $D(L^0) \subset D(\mathcal{E}^0)=H^{1,2}_0(U)$  (see \cite[I. Corollary 2.10]{MR} and Section \ref{sec2}), we obtain that for each $u \in D(L^0)_b$
$$
\int_{U} (1-L^0) u \cdot v \, dx =\mathcal{E}_1^0(u,v)  = \mathcal{T}(u) = \int_{U} (1-L^0)u \cdot h dx.
$$
Thus, 
\begin{equation} \label{(1.5)}
\int_{U} (1-L^0) u \cdot (v-h) dx =0  \quad \text{ for all $u \in D(L^0)_b$}.
\end{equation}
By the sub-Markovian property of $(G^{0}_{\alpha})_{\alpha>0}$, it follows that $L^{\infty}(U) \subset (1-L^0) (D(L^0)_b)$, so that $v-h=0$ by \eqref{(1.5)}.
Thus, $h = v \in H^{1,2}_0(U)_b$. Since $\lim_{n \rightarrow \infty} n G^0_n h = h$ in $H^{1,2}_0(U)$ and in $L^{2^*}(U)$ by \cite[I. Theorem 2.13(ii)]{MR} and Sobolev's inequality, it holds from \eqref{1.5eq} that
\begin{align*}
0 &\leq \mathcal{E}^0_1(h,h) = \lim_{n \rightarrow \infty} \mathcal{E}^0_1(n G_n^0 h, h) = \lim_{n \rightarrow \infty} \int_{U} (1-L^0) nG^0_n h  \cdot h dx  \\
&= \lim_{n \rightarrow \infty} - \int_{U} \langle B, \nabla nG^0_n h \rangle h dx  - \int_{U} c n G^0_n h \cdot h dx = \int_{U} -\frac12 \langle B, \nabla h^2 \rangle  - c h^2 dx \leq 0,
\end{align*}
so that $h=0$ as desired. \\
(iii) 
The proof is the same as \cite[Step 3, proof of Proposition 1.1(i)]{S99}.
\end{proof}

\begin{remark} \label{dirichdiffer}
In Step 2 of the proof of \cite[Proposition 1.1(i)]{S99}, the fact that for each $h \in D(\mathcal{E}^{0})$, 
$
\lim_{t \rightarrow 0+} \mathcal{E}^{0} (T^{0,V}_t h -h, T^{0,V}_t h -h  )=0
$ 
(cf. \cite[Lemma 1.3.3]{FOT}) is used, where $(\mathcal{E}^0, D(\mathcal{E}^0))$ in \cite{S99} is a symmetric Dirichlet form. 
But in our case, $(\mathcal{E}^0, D(\mathcal{E}^0))$ is possibly a non-symmetric Dirichlet form, and hence  in the last part of the proof of Theorem \ref{theo1.2}(ii), we use the fact from \cite[I. Theorem 2.13(ii)]{MR} that $\lim_{n \rightarrow \infty} \mathcal{E}^0 ( nG^0_n h-h, nG^0_n h-h ) = 0$ for each $h \in D(\mathcal{E}^0)$.
\end{remark}

\begin{lemma} \label{lem1.3}
Assume {\bf (A)}. Let $(\overline{L}, D(\overline{L}))$ be as in Theorem \ref{theo1.2}. Let $u \in D(\overline{L})_b$ and $(u_n)_{n \geq 1} \subset D(L^0)_b$ be a sequence of functions such that $u_n \rightarrow u$ in $D(\overline{L})$ and $u_n \rightarrow u$ a.e. 
Let $M_1, M_2>0$ be constants satisfying that 
$\|u\|_{L^{\infty}(U)}<M_1<M_2$.
Then,
\begin{equation} \label{(1.10)}
\lim_{n \rightarrow \infty} \int_{ \{M_1 \leq |u_n| \leq M_2\}} \langle A \nabla u_n, \nabla u_n \rangle dx = 0.
\end{equation}
\end{lemma}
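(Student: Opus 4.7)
The plan is to use the chain-rule formula of Lemma \ref{lem1.1}(ii) applied to $\psi(u_n)$ for a cleverly chosen $\psi \in C^2(\mathbb{R})$ to express the localized energy $\int \psi''(u_n)\langle A\nabla u_n,\nabla u_n\rangle\,dx$ as a manageable sum; sign constraints on the drift and zero-order terms (using ${\rm div}\,B\le 0$ and $c\ge 0$) will allow us to dominate this by a term built from $\psi'(u_n)$ and $Lu_n$, which then vanishes in the limit because $\psi'(u)\equiv 0$.

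Concretely, I would choose $M_0\in(\|u\|_{L^\infty(U)},M_1)$ and construct an even, non-negative $\psi''\in C(\mathbb{R})$ with $\psi''(t)=0$ for $|t|\le M_0$ and $\psi''(t)\ge 1$ for $M_1\le |t|\le M_2$. Integrating twice from $0$ produces $\psi\in C^2(\mathbb{R})$ with $\psi(0)=0$, $\psi\ge 0$, $\psi'(t)=0$ on $[-M_0,M_0]$, and $t\psi'(t)\ge 0$ on $\mathbb{R}$. Since $u_n\in D(L^0)_b\subset D(\overline{L}^0)_b$ and $\overline{L}^0 u_n=L^0 u_n$ by Proposition \ref{prop1.1}, Lemma \ref{lem1.1}(ii) gives
\[
\int_U \overline{L}^0\psi(u_n)\,dx=\int_U \psi'(u_n)L^0 u_n\,dx+\int_U \psi''(u_n)\langle A\nabla u_n,\nabla u_n\rangle\,dx.
\]
The left-hand side is $\le 0$, because $\psi(u_n)\ge 0$ and $(\overline{T}^0_t)_{t>0}$ is sub-Markovian, exactly as in the derivation preceding \eqref{eq1.4} in the proof of Theorem \ref{theo1.2}(i). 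Substituting $L^0 u_n=Lu_n+\langle B,\nabla u_n\rangle+c u_n$ from \eqref{givenop}, one then needs to handle the drift and the zero-order contributions.

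For the drift term, the $H^{1,2}_0$-chain rule yields $\psi'(u_n)\nabla u_n=\nabla\psi(u_n)$, and since $\psi(u_n)\in H^{1,2}_0(U)_b$ with $\psi(u_n)\ge 0$, a standard density approximation by non-negative test functions extends \eqref{weakdivne} to give $\int_U\langle B,\nabla\psi(u_n)\rangle\,dx\ge 0$. For the zero-order term, $c\ge 0$ and $t\psi'(t)\ge 0$ force $c u_n\psi'(u_n)\ge 0$ pointwise. Combining these with the inequality $\int_U \overline{L}^0\psi(u_n)\,dx\le 0$, we obtain
\[
\int_U \psi''(u_n)\langle A\nabla u_n,\nabla u_n\rangle\,dx\le -\int_U \psi'(u_n)Lu_n\,dx.
\]

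It remains to drive the right-hand side to $0$. Since $|u|\le\|u\|_{L^\infty(U)}<M_0$ a.e.\ and $\psi'$ vanishes on $[-M_0,M_0]$, $\psi'(u)\equiv 0$; the pointwise a.e.\ convergence $u_n\to u$ together with continuity of $\psi'$ and uniform bound $\|\psi'(u_n)\|_{L^\infty}\le\|\psi'\|_{L^\infty(\mathbb{R})}$ then gives $\psi'(u_n)\to 0$ a.e.\ boundedly. Writing $Lu_n=(Lu_n-\overline{L}u)+\overline{L}u$ and using $Lu_n\to\overline{L}u$ in $L^1(U)$ for the first piece and bounded convergence with dominating function $|\overline{L}u|\in L^1(U)$ for the second yields $\int_U\psi'(u_n)Lu_n\,dx\to 0$. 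Finally, since $\psi''(u_n)\ge 1_{\{M_1\le |u_n|\le M_2\}}$ and $\langle A\nabla u_n,\nabla u_n\rangle\ge 0$ by \eqref{assump}, \eqref{(1.10)} follows. The main subtlety is the sign bookkeeping that dictates the simultaneous structural requirements on $\psi$ ($\psi\ge 0$ for the sub-Markovian step, $t\psi'(t)\ge 0$ for the $c$-term, $\psi(u_n)\ge 0$ in $H^{1,2}_0(U)$ for the $B$-term, $\psi'=0$ on a neighborhood of the essential range of $u$ for the final limit); the rest reduces to routine dominated convergence.
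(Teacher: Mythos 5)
Your argument is correct, but it runs through a different mechanism than the paper's. You invoke the second-order chain rule of Lemma \ref{lem1.1}(ii) together with the inequality $\int_U \overline{L}^0 g\,dx\le 0$ for $0\le g\in D(\overline{L}^0)_b$ (the sub-Markovian/$L^1$-contraction step, exactly as in the dissipativity part of Theorem \ref{theo1.2}(i)), and you capture the annulus energy through the lower bound $\psi''\ge 1_{\{M_1\le|t|\le M_2\}}$; the remaining sign work for the $B$- and $c$-terms and the limit $-\int_U\psi'(u_n)Lu_n\,dx\to 0$ via graph-norm convergence plus dominated convergence matches the paper's. The paper instead avoids Lemma \ref{lem1.1}(ii) entirely: it tests the weak identity of Lemma \ref{lem1.1}(i) with the Lipschitz truncation $\eta(u_n)$, $\eta(t)=(t-M_1)^+\wedge(M_2-M_1)$, so that $\nabla\eta(u_n)=1_{\{M_1\le u_n\le M_2\}}\nabla u_n$ gives the annulus energy as an exact identity $\int_{\{M_1\le u_n\le M_2\}}\langle A\nabla u_n,\nabla u_n\rangle\,dx=-\int_U L^0u_n\,\eta(u_n)\,dx$, then adds the nonnegative drift and $c$-contributions to pass to $-\int_U\overline{L}u_n\,\eta(u_n)\,dx$ and lets $n\to\infty$ (treating $\pm u_n$ separately, where you treat both signs at once via an even $\psi''$). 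Your route costs the extra positivity inequality for $\int_U\overline{L}^0\psi(u_n)\,dx$ but needs no discussion of Sobolev truncations and level sets; the paper's is slightly more elementary in that it only uses the form identity with a Lipschitz test function. Two small points to tidy: choose $\psi''$ compactly supported (or otherwise ensure $\psi'\in L^\infty(\mathbb{R})$), since no uniform $L^\infty$ bound on $(u_n)_{n\ge1}$ is available and your dominated-convergence step uses $\|\psi'(u_n)\|_{L^\infty(U)}\le\|\psi'\|_{L^\infty(\mathbb{R})}$; and the extension of \eqref{weakdivne} to nonnegative functions in $H^{1,2}_0(U)_b$ should be stated as the standard density fact — the paper uses the same extension (e.g.\ for $\phi(u_n)$ and $\psi_\varepsilon(u)$), so this is consistent with its framework.
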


\begin{proof}
Let  $M_3:=M_2-M_1$ and $\eta(t) := (t-M_1)^+  \wedge M_3$, $t \in \mathbb{R}$. Then, $\eta(t)=0$ for all $t \in (-\infty, M_1]$
 and $\eta(t) \geq 0$ for all $t \in \mathbb{R}$. Let $\phi(t):= \int_{-\infty}^{t}  \eta(s) ds$, $t \in \mathbb{R}$. Then, $\phi \in C^1(\mathbb{R})$ with $\phi(t) \geq 0$ for all $t \in \mathbb{R}$. Let $v \in H^{1,2}_0(U)$. Then, $\phi(v) \in H^{1,2}_0(U)$ and $\nabla \phi(v) = \eta(v) \nabla v$ by the chain rule. Moreover, by \cite[Theorem 4.4(iii)]{EG15} 
$$
\eta(v)  =(v-M_1)^+ \wedge M_3 = M_3-\left((v-M_1)^+ -M_3 \right)^- \in H^{1,2}_0(U),
$$ 
and that $\nabla \eta(v) = 1_{ \{ M_1 < v <M_2\} }\nabla v$. Indeed, since $\nabla v =0$ on $\{v= M\}$ for each $M \in \mathbb{R}$ (\cite[Theorem 4.4(iv)]{EG15}), it follows that
$\nabla \eta(v) = 1_{ \{ M_1 \leq  v  \leq M_2\} }\nabla v$. Thus, 
\begin{align*}
&0 \leq \int_{\{ M_1 \leq u_n \leq M_2 \}} \langle A \nabla u_n, \nabla u_n \rangle dx = \int_{U} \langle A \nabla u_n, 1_{\{ M_1 \leq u_n \leq M_2 \}}\nabla u_n \rangle dx \\
& =  \int_{U} \langle A \nabla u_n, \nabla \eta(u_n) \rangle dx  = -\int_{U} L^0 u_n \cdot \eta(u_n) dx  \\
&\leq  -\int_{U} L^0 u_n \cdot \eta(u_n) dx + \int_{U} \langle B, \nabla \phi(u_n) \rangle dx  + \int_{U} cu_n \eta(u_n) dx   \\\
& = - \int_{U} \overline{L} u_n \cdot \eta(u_n) dx \rightarrow   - \int_{U} \overline{L} u \cdot \eta(u)  dx =0 \;\; \text{ as $n \rightarrow \infty$},
\end{align*}
where the last line follows from  Lebesgue's theorem and the fact that $\eta(u)=0$. Replacing $u_n$ by $-u_n$ in the above,
\eqref{(1.10)} follows.
\end{proof}

\begin{theorem} \label{theo1.4}
Assume {\bf (A)}. Let $(\overline{L}, D(\overline{L}))$ be as in Theorem \ref{theo1.2}. Then, the following hold:
\begin{itemize}
\item[(i)]
$D(\overline{L}^0)_b \subset D(\overline{L})$ and $\overline{L} u  = \overline{L}^0 u -\langle B, \nabla u \rangle -c u$\, for all $u \in D(\overline{L}^0)_b$.

\item[(ii)]
$D(\overline{L})_b \subset H^{1,2}_0(U)$ and for all $u \in D(\overline{L})_b$ and $v \in H^{1,2}_0(U)_b$,
\begin{equation} \label{eq.1.11}
\mathcal{E}^0(u, v) + \int_{U} \langle B, \nabla u \rangle v + cu v \,dx = - \int_{U} \overline{L} u \cdot v \, dx.
\end{equation}

\item[(iii)]
For any $f \in L^{\infty}(U)$, $v \in H^{1,2}_0(U)_b$ and $\alpha \in (0, \infty)$, 
\begin{equation} \label{resoliden} 
\int_{U} \langle A \nabla \overline{G}_{\alpha}f, \nabla v \rangle+  \langle B, \nabla  \overline{G}_{\alpha} f \rangle v + (c+\alpha)\overline{G}_{\alpha}f  \cdot v \,dx =  \int_{U}  f v \, dx.
\end{equation}
\end{itemize}
\end{theorem}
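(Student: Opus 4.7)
My plan is to prove (i) by approximating $u \in D(\overline{L}^0)_b$ via the $L^2$-resolvent $\alpha G^0_\alpha u$, then (ii) by a $C^2$-truncation of a defining sequence in $D(\overline{L})$ (the delicate step, requiring Lemma \ref{lem1.3}), and finally (iii) by applying (ii) to $\overline{G}_\alpha f$.

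For (i), I fix $u \in D(\overline{L}^0)_b$ and set $u_\alpha := \alpha G^0_\alpha u$ for $\alpha > 0$. The sub-Markovian property yields $u_\alpha \in D(L^0)_b$ with $\|u_\alpha\|_{L^\infty(U)} \leq \|u\|_{L^\infty(U)}$. Using Proposition \ref{prop1.1} and the $L^1$-strong continuity of $(\overline{G}^0_\alpha)_{\alpha>0}$, one gets $u_\alpha \to u$ and $L^0 u_\alpha = \alpha\,\overline{G}^0_\alpha \overline{L}^0 u \to \overline{L}^0 u$ in $L^1(U)$. Lemma \ref{lem1.1}(i) gives $u_\alpha \to u$ in $H^{1,2}_0(U)$, so $\langle B,\nabla u_\alpha \rangle \to \langle B,\nabla u\rangle$ in $L^1(U)$ by Cauchy--Schwarz, while the Sobolev embedding $H^{1,2}_0(U) \hookrightarrow L^{2^*}(U)$ combined with H\"older's inequality (with conjugate exponents $2_*$ and $2^*$) gives $c\,u_\alpha \to c\,u$ in $L^1(U)$. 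Hence $L u_\alpha \to \overline{L}^0 u - \langle B,\nabla u\rangle - cu$ in $L^1(U)$, and closedness of $(\overline{L}, D(\overline{L}))$ yields (i).

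For (ii), fix $u \in D(\overline{L})_b$, set $M := \|u\|_{L^\infty(U)} + 1$, and pick $(u_n) \subset D(L^0)_b$ with $u_n \to u$ in $D(\overline{L})$ and, after extracting a subsequence, a.e. Choose $\phi \in C^2(\mathbb{R})$ with $\phi(0)=0$, $\phi(t) = t$ for $|t| \le M$, $\phi' \in [0,1]$, and $\phi''$ bounded and supported in $\{M \le |t| \le 2M\}$, and set $\tilde u_n := \phi(u_n)$. By Lemma \ref{lem1.1}(ii) and part (i), $\tilde u_n \in D(\overline{L})_b$ with $\|\tilde u_n\|_{L^\infty(U)} \le 2M$ and
\[
\overline{L}\tilde u_n = \phi'(u_n)\,Lu_n + \phi''(u_n)\langle A\nabla u_n,\nabla u_n\rangle + c\bigl(\phi'(u_n)u_n - \phi(u_n)\bigr).
\]
Lemma \ref{lem1.3} drives the second term to $0$ in $L^1(U)$; the first converges to $\overline{L}u$ since $\phi'(u) = 1$ a.e.; and the third vanishes since $\phi'(u)u = \phi(u) = u$ a.e. Thus $\tilde u_n \to u$ in $D(\overline{L})$ while staying uniformly bounded by $2M$ in $L^\infty(U)$. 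Applying Lemma \ref{lem1.1}(i) to $\tilde u_n - \tilde u_m \in D(\overline{L}^0)_b$ and substituting $\overline{L}^0 = \overline{L} + \langle B,\nabla\cdot\rangle + c\,\cdot$ from (i), I obtain
\[
\mathcal{E}^0(\tilde u_n - \tilde u_m, \tilde u_n - \tilde u_m) \le -\int_U \overline{L}(\tilde u_n - \tilde u_m)(\tilde u_n - \tilde u_m)\,dx \le 4M\,\|\overline{L}(\tilde u_n - \tilde u_m)\|_{L^1(U)},
\]
since the $B$-contribution equals $\tfrac{1}{2}\int_U \langle B,\nabla(\tilde u_n - \tilde u_m)^2\rangle\,dx \ge 0$ by \eqref{weakdivne} and the $c$-contribution is nonnegative. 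Coercivity then makes $(\tilde u_n)$ Cauchy in $H^{1,2}_0(U)$, and matching its $L^2$-limit with $u$ yields $u \in H^{1,2}_0(U)$ and $\tilde u_n \to u$ in $H^{1,2}_0(U)$. Identity \eqref{eq.1.11} follows by invoking Lemma \ref{lem1.1}(i) for each $\tilde u_n$ and passing to the limit, with the $c$-term handled via Sobolev--H\"older as in (i).

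For (iii), sub-Markovianity gives $\overline{G}_\alpha f \in D(\overline{L})_b$ whenever $f \in L^\infty(U)$, and substituting the resolvent identity $\overline{L}\,\overline{G}_\alpha f = \alpha\,\overline{G}_\alpha f - f$ into \eqref{eq.1.11} with $u = \overline{G}_\alpha f$ delivers \eqref{resoliden} after rearrangement. The principal obstacle is the truncation step in (ii): a defining sequence for $\overline{L}$ is not a priori $L^\infty$-bounded, and the chain-rule truncation produces the Hessian-type term $\phi''(u_n)\langle A\nabla u_n,\nabla u_n\rangle$ in $\overline{L}\tilde u_n$, which is precisely what Lemma \ref{lem1.3} is tailored to eliminate.
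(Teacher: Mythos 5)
Your proposal is correct and follows essentially the same route as the paper: (i) via the resolvent approximation $\alpha G^0_\alpha u$ (the paper uses $nG^0_n u$) together with closedness of $(\overline{L},D(\overline{L}))$, (ii) via a $C^2$ truncation of an approximating sequence in $D(L^0)_b$ using Lemma \ref{lem1.1}(ii), part (i), Lemma \ref{lem1.3} for the term $\phi''(u_n)\langle A\nabla u_n,\nabla u_n\rangle$, and the estimate $\mathcal{E}^0(v,v)\le \|v\|_{L^\infty(U)}\|\overline{L}v\|_{L^1(U)}$ to get a Cauchy sequence in $H^{1,2}_0(U)$, and (iii) by substituting $u=\overline{G}_\alpha f$ into \eqref{eq.1.11}. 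The only deviation is cosmetic: you use a monotone cutoff $\phi$ (for which you should additionally require $\phi'=0$ on $\{|t|\ge 2M\}$ to justify $\|\phi(u_n)\|_{L^\infty(U)}\le 2M$) where the paper takes a compactly supported $\psi$ with $\psi(t)=t$ near the range of $u$.
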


\begin{proof}
(i)
Let $u \in D(\overline{L}^0)_b$. Then $u \in H^{1,2}_0(U)$ by Lemma \ref{lem1.1}(i) and
$n G^0_{n} u \in D(L^0)_b \subset D(\overline{L}) \cap H^{1,2}_0(U)$. Since $(\overline{G}_{\alpha})_{\alpha>0}$ is
strongly continuous on $L^1(U)$ and $nG^0_{n} u \rightarrow u$ in $H^{1,2}_0(U)$ and in $L^{2^*}(U)$ as $n \rightarrow \infty$ by Lemma \ref{lem1.1}(i) and Sobolev's inequality, we have
\begin{align*}
L (nG^0_{n} u)  &= L^0 (nG^0_{n} u)  - \langle B, \nabla nG_n^0 u \rangle -c nG^0_n u, \\
&= n \overline{G}^0_n\, \overline{L}^0 u - \langle B, \nabla nG_n^0 u \rangle -c nG^0_n u \rightarrow  \overline{L}^0 u -\langle B, \nabla u \rangle -c u \;\text{ in $L^1(U)$}.
\end{align*}
By the closedness of $(\overline{L}, D(\overline{L}))$ on $L^1(U)$, $u \in D(\overline{L})$ and the assertion follows. \\
(ii) Let $M_1 =\|u\|_{L^{\infty}(U)}+1$, $M_2=\|u\|_{L^{\infty}(U)}+2$ and $\psi \in C_0^{2}(\mathbb{R})$ be such that $\psi(t)=t$ for all $|t| \leq M_1$ and $\psi(t)=0$ for all $|t| \geq M_2$.  Let $(u_n)_{n \geq 1} \subset D(L^0)_b$ be such that $u_n \rightarrow u$ in $D(\overline{L})$ and $u_n \rightarrow u$ a.e. Then by Lemma \ref{lem1.1}(ii) and Theorem \ref{theo1.4}(i), $\psi(u_n) \in D(\overline{L}^0)_b \cap H^{1,2}_0(U) \subset D(\overline{L})$ and
\begin{align*}
\overline{L} \psi(u_n)&=\overline{L}^0 \psi(u_n) -\langle B, \nabla \psi(u_n) \rangle -c \psi(u_n) \\
&= \psi' (u_n) \left(L u_n +c \psi(u_n) \right)- c \psi(u_n)  + \psi''(u_n) \langle A \nabla u_n, \nabla u_n \rangle.
\end{align*}
By Lebesgue's theorem, $\lim_{n \rightarrow \infty}\psi' (u_n) \big(L u_n +c \psi(u_n) \big)- c \psi(u_n) =\overline{L}u$ in $L^1(U)$.
By Lemma \ref{lem1.3},
\begin{align*}
&\int_{U} |\psi''(u_n) \langle A \nabla u_n, \nabla u_n \rangle| dx \leq \max_{[M_1, M_2]}|\psi''| \int_{ \{M_1 \leq |u_n| \leq M_2\}}  \langle A \nabla u_n, \nabla u_n \rangle dx \rightarrow 0.  
\end{align*} 
 as $n \rightarrow \infty$, so that $\lim_{n \rightarrow \infty} L \psi(u_n)  = \overline{L}u$ in $L^1(U)$. Meanwhile, by Lemma \ref{lem1.1}(i) and Theorem \ref{theo1.4}(i) for any $v \in D(\overline{L}^0)_b$,
\begin{align*}
\mathcal{E}^0(v,v)=-\int_{U}  v\overline{L}^{0} v \,dx = - \int_{U} v \overline{L} v+ \frac12 \langle B, \nabla v^2 \rangle +cv^2 \, dx \leq \|v\|_{L^{\infty}(U)} \| \overline{L}v \|_{L^1(U)}.
\end{align*}
Hence, $\mathcal{E}^0\big( \psi(u_n) - \psi(u_m), \psi(u_n)- \psi(u_m)    \big)  \leq 2 \| \psi \|_{L^{\infty}(\mathbb{R})} \left\|   \overline{L} \psi(u_n)  - \overline{L} \psi(u_m) \right\|_{L^1(U)} \rightarrow 0$ as $n, m \rightarrow \infty$. Thus, by the completeness of $H^{1,2}_0(U)$ and Sobolev's inequality, $u \in H^{1,2}_0(U) \cap L^{2^*}(U)$ and $\lim_{n \rightarrow \infty} \psi(u_n) = \psi(u)$ in $H^{1,2}_0(U)$ and in $L^{2^*}(U)$. Therefore, by Lemma \ref{lem1.1}(i) and Theorem \ref{theo1.4}(i) for any $v \in H^{1,2}_0(U)_b$ 
\begin{align*}
&\mathcal{E}^0(u,v)  + \int_{U} \langle B, \nabla u \rangle v + cuv dx \\
&=\lim_{n \rightarrow \infty} \mathcal{E}^0(\psi(u_n),v)  + \int_{U} \langle B, \nabla \psi(u_n) \rangle v dx  +\int_{U} c\psi(u_n)v dx \\
&= \lim_{n \rightarrow \infty} -\int_{U} \overline{L} \psi(u_n) \cdot v dx =  -\int_{U} \overline{L} u \cdot v dx.
\end{align*} 
(iii) By substituting $u$ for $G_{\alpha} f$, $f \in L^{\infty}(U)$ and $\alpha>0$ in \eqref{eq.1.11}, the assertion follows.
\end{proof}

\begin{definition}
Let $(\overline{G}_{\alpha})_{\alpha>0}$ be the sub-Markovian $C_0$-resolvent of contractions on $L^1(U)$ as in Theorem \ref{theo1.2}(ii). Then, by a consequence of the Riesz-Thorin interpolation (\cite[Chapter 2, Theorem 2.1]{S11}), $(\overline{G}_{\alpha})_{\alpha>0}$ restricted on $L^{\infty}(U)$ extends to a sub-Markovian $C_0$-resolvent of contractions on $L^r(U)$ for each $r \in [1, \infty)$ and  a sub-Markovian resolvent on $L^{\infty}(U)$. Denote these by $(G_{\alpha})_{\alpha>0}$ independently of the $L^r(U)$-space, $r \in [1, \infty]$ on which they are acting. Indeed, $G_{\alpha}f = \overline{G}_{\alpha}f$  and by the $L^r$-contractions properties, 
\begin{equation} \label{contraprop}
\|G_{\alpha}f\|_{L^r(U)} \leq \alpha^{-1}\|f\|_{L^r(U)}
\end{equation}
for any $\alpha>0$ and $f \in L^r(U)$ with $r \in [1, \infty]$.

\end{definition}

\section{Existence of bounded weak solutions}\label{sec4}

\begin{theorem} \label{theo2.6}
Assume {\bf (A)}.  Let $f \in L^{2_{*}}(U)$ and $F \in L^2(U, \mathbb{R}^d)$.
Then, the following hold:

\begin{itemize}
\item[(i)]
Let $u \in H^{1,2}_0(U)_b$ be given. Assume that  $u$ is a weak solution to \eqref{undeq}.
Then, \eqref{enerestim} holds.

\item[(ii)]
Let $w \in H^{1,2}_0(U)_b$ be given. Assume that $w$ is a weak solution to \eqref{undeqdual}.
Then, \eqref{enerestim} is satisfied where $u$ is replaced by $w$.

\item[(iii)]
There exists a weak solution $u$  to \eqref{undeq} such that \eqref{enerestim} holds.

\item[(iv)]
There exists a weak solution $w$ to \eqref{undeqdual} such that \eqref{enerestim} is satisfied where $u$ is replaced by $w$.

\item[(v)]
Let $\alpha \in (0, \infty)$. Then, $G_{\alpha} f$ is a weak solution to \eqref{undeq} where $F$ is replaced by $0$. Moreover, \eqref{enerestim} holds where $u$ and $F$ are replaced by $G_{\alpha}f$ and $0$, respectively. If $r \in [1, \infty]$ and $f \in L^{2_*}(U) \cap L^r(U)$, then $G_{\alpha}f \in L^r(U)$ and \eqref{contraprop} holds.

\end{itemize}
\end{theorem}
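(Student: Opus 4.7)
The plan for Parts (i) and (ii) is to test each weak formulation against the solution itself. I first upgrade the admissible test space in \eqref{(maineq)} (resp.\ \eqref{(maineqdual)}) from $C_0^\infty(U)$ to $H^{1,2}_0(U)_b$ by a truncation-plus-mollification argument; every integrand stays finite because $B\in L^2$, $c\in L^{2_*}$, and the test functions remain uniformly bounded in $L^\infty$. Substituting $\varphi=u$ (resp.\ $v=w$), the drift contribution becomes $\tfrac12\int_U\langle B,\nabla u^2\rangle\,dx\geq 0$, which follows from ${\rm div}B\leq 0$ after approximating the nonnegative function $u^2\in H^{1,2}_0(U)_b$ in $H^{1,2}_0(U)$ by nonnegative elements of $C_0^\infty(U)$. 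Combined with $(c+\alpha)u^2\geq 0$ and uniform ellipticity, the identity collapses to
\begin{equation*}
\lambda\|\nabla u\|_{L^2(U)}^2 \leq \int_U fu\,dx + \int_U\langle F,\nabla u\rangle\,dx \leq \bigl(C(d)\,\|f\|_{L^{2_*}(U)} + \|F\|_{L^2(U)}\bigr)\|\nabla u\|_{L^2(U)}
\end{equation*}
after Sobolev's inequality controls $\int fu$, and Poincar\'e then yields \eqref{enerestim}.

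For Part (v), Theorem \ref{theo1.4}(iii) already identifies $G_\alpha f$ as a weak solution of \eqref{undeq} (with $F=0$) whenever $f\in L^\infty(U)$. To extend to $f\in L^{2_*}(U)$, I truncate $f_n:=(-n)\vee(f\wedge n)\in L^\infty(U)\cap L^{2_*}(U)$, which converges to $f$ in $L^{2_*}(U)$ and, since $U$ is bounded, also in $L^1(U)$. The difference $G_\alpha f_n-G_\alpha f_m$ is a bounded weak solution with data $(f_n-f_m,0)$, so Part (i) gives $\|G_\alpha f_n-G_\alpha f_m\|_{H^{1,2}_0(U)}\leq C_1\|f_n-f_m\|_{L^{2_*}(U)}\to 0$; hence $(G_\alpha f_n)$ is Cauchy in $H^{1,2}_0(U)$. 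Meanwhile the $L^1$-contraction yields $\|G_\alpha f_n-G_\alpha f\|_{L^1(U)}\leq\alpha^{-1}\|f_n-f\|_{L^1(U)}\to 0$, so the $H^{1,2}_0$-limit must coincide with $G_\alpha f$; passing to the limit in \eqref{resoliden} delivers the weak formulation, \eqref{enerestim} follows from Part (i), and \eqref{contraprop} is the Riesz--Thorin extension of $G_\alpha$ already recorded in the definition following Theorem \ref{theo1.4}.

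For Part (iii), I reduce to the $F=0$ case (settled by Part (v)) via a $C_0^\infty$-approximation of $F$. Take $F_n\in C_0^\infty(U,\mathbb{R}^d)$ with $F_n\to F$ in $L^2(U,\mathbb{R}^d)$; then $f-{\rm div}F_n\in L^{2_*}(U)$ (as ${\rm div}F_n$ is bounded), and $u_n:=G_\alpha(f-{\rm div}F_n)\in H^{1,2}_0(U)$ is, by Part (v) together with the identity $\int_U(-{\rm div}F_n)v\,dx=\int_U\langle F_n,\nabla v\rangle\,dx$ valid for $F_n\in C_0^\infty(U,\mathbb{R}^d)$, a weak solution to \eqref{undeq} with data $(f,F_n)$. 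Applying Part (i) to $u_n-u_m$ (a weak solution with data $(0,F_n-F_m)$) gives $\|u_n-u_m\|_{H^{1,2}_0(U)}\leq C_1\|F_n-F_m\|_{L^2(U)}\to 0$, so $u_n\to u$ strongly in $H^{1,2}_0(U)$; each integral in the weak formulation then passes to the limit via weak--strong pairings (e.g.\ $B\varphi\in L^2$ meets $\nabla u_n\to\nabla u$ in $L^2$, and $(c+\alpha)\varphi\in L^{2_*}$ meets $u_n\to u$ in $L^{2^*}$), so $u$ is a weak solution with \eqref{enerestim} inherited from Part (i). For Part (iv), since Section \ref{sec3} did not construct a dual resolvent, I instead run a Galerkin scheme against a basis $(\varphi_i)\subset C_0^\infty(U)$ of $H^{1,2}_0(U)$ chosen $C^1$-dense in $C_0^\infty(U)$: on each $V_n:={\rm span}\{\varphi_1,\ldots,\varphi_n\}\subset C_0^\infty(U)$ the dual bilinear form is well-defined and, by the sign computation of Part (ii), positive definite with $\mathcal B^d(w,w)\geq\lambda\|\nabla w\|_{L^2(U)}^2$, so a finite-dimensional linear solve produces $w_n\in V_n$; testing the Galerkin identity against $w_n$ recovers the uniform bound $\|w_n\|_{H^{1,2}_0(U)}\leq C_1(\|f\|_{L^{2_*}}+\|F\|_{L^2})$ exactly as in (ii), and a weakly convergent subsequence together with weak--strong limiting in each term and $C^1$-density of $\bigcup_n V_n$ produces the weak solution $w$.

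The principal technical delicacy sits in Part (v): the resolvent $G_\alpha$ is built intrinsically in Section \ref{sec3} as an $L^1$-object, so its action on $L^{2_*}$-data is not part of the definition and must be reconstructed by dovetailing the PDE-level energy estimate (i) with the abstract $L^1$-contraction to identify a common limit in both $L^1(U)$ and $H^{1,2}_0(U)$. This logical dependence also fixes the order of the proof: (i) and (ii) are proved first and independently, (v) depends on (i), (iii) rests on (v), and (iv) runs in parallel by a Galerkin argument modelled on (ii).
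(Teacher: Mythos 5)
Your parts (i), (ii) and (v) follow essentially the same route as the paper: extend the test class to $H^{1,2}_0(U)_b$, test with the solution itself, discard the drift and zero-order terms by sign (using \eqref{weakdivne} for $\tfrac12\int_U\langle B,\nabla u^2\rangle\,dx\ge 0$), and then Sobolev/Young; for (v), approximate $f\in L^{2_*}(U)$ by bounded functions, use Theorem \ref{theo1.4}(iii) and the energy estimate for differences to get a Cauchy sequence in $H^{1,2}_0(U)$, identify the limit with $G_\alpha f$ through the $L^1$- (or $L^{2_*}$-) contraction, and quote the interpolation definition of $(G_\alpha)$ for \eqref{contraprop}. Where you genuinely diverge is (iii) and (iv). The paper handles both symmetrically and independently of Section \ref{sec3}: it mollifies the zero extension of $B$ (so that ${\rm div}B_n\le 0$ weakly on the shrunken domains $U_{\delta/2n}$), smooths $c,f,F$, invokes Trudinger's existence theorems on $U_{\delta/2n}$ to get bounded solutions, zero-extends, and passes to a weak $H^{1,2}_0(U)$ limit. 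You instead obtain (iii) from the resolvent: approximate $F$ by $F_n\in C_0^\infty(U,\mathbb{R}^d)$, note $G_\alpha(f-{\rm div}F_n)$ solves \eqref{undeq} with data $(f,F_n)$, and use part (i) on the bounded differences $G_\alpha({\rm div}F_m-{\rm div}F_n)$ to get strong $H^{1,2}_0$ convergence; and you obtain (iv) by a Galerkin scheme on a countable $C^1$-dense family in $C_0^\infty(U)$, where coercivity on each $V_n$ comes from the same sign computation. Both alternatives are viable: your (iii) buys strong rather than weak $H^{1,2}_0$ approximation and avoids citing Trudinger, at the price of making (iii) logically dependent on the Dirichlet-form construction of $(G_\alpha)$; your Galerkin (iv) is self-contained, whereas the paper's mollification argument treats (iii) and (iv) by one and the same mechanism.

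One step in your (iii) is stated too loosely: ``\eqref{enerestim} inherited from Part (i)'' does not literally apply, because neither $u_n=G_\alpha(f-{\rm div}F_n)$ nor the limit $u$ is known to be bounded ($f$ is only in $L^{2_*}(U)$), and Part (i) is formulated for $u\in H^{1,2}_0(U)_b$ precisely because terms such as $\int_U\langle B,\nabla u\rangle u\,dx$ need boundedness to be defined. The repair is immediate with the tools you already have: write $u_n=G_\alpha f+G_\alpha(-{\rm div}F_n)$, bound the first summand by Part (v) ($\|G_\alpha f\|_{H^{1,2}_0(U)}\le C_1\|f\|_{L^{2_*}(U)}$) and the second by Part (i) applied to the bounded solution with data $(0,F_n)$ ($\le C_1\|F_n\|_{L^2(U)}$), and then pass to the limit using (weak or strong) lower semicontinuity of the norm. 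With that adjustment, and with the routine verifications in (iv) (existence of a countable $C^1$-dense family with controlled supports, and the weak $L^{2^*}$--$L^{2_*}$ pairing for the $c$-term when passing to the Galerkin limit), your argument is complete.
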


\begin{proof}
(i) By an approximation, \eqref{(maineq)} holds for any $\varphi \in H_0^{1,2}(U)_b$. Substituting $u$ for $\varphi $, it follows from Young's inequality and Sobolev's inequality that
\begin{align*}
\lambda \| \nabla u \|^2_{L^2(U)} &\leq \int_{U} \langle A \nabla u, \nabla u \rangle dx \leq \int_{U} f u + \langle F, \nabla u \rangle dx \\
& \leq N^2 \varepsilon \|\nabla u\|^2_{L^2(U)} + (4\varepsilon)^{-1} \|f\|^2_{L^{2_*}(U)}  + \varepsilon \|\nabla u \|^2_{L^2(U)} + (4\varepsilon)^{-1}\|F\|_{L^2(U)}^2,
\end{align*}
where $N>0$ is the constant as in Corollary \ref{stabil}. Choosing $\varepsilon=\frac{\lambda}{2(N^2+1)}$ and using Sobolev's inequality, \eqref{enerestim} follows. \\
(ii) Choosing $\varphi=w$, the proof is analogous to the one of (i). \\
(iii) 
Let $\eta$ be a standard mollifier on $\mathbb{R}^d$ and define $\eta_{\varepsilon}(x):= \varepsilon^{-d}\eta(x/\varepsilon)$, $x \in \mathbb{R}^d$ and $\varepsilon>0$.
For $\varepsilon \in (0,1)$, let $U_{\varepsilon}:= \{ x \in U : \|x-y\|>\varepsilon \text{ for all $y \in \partial U$}\} $. Choose $\delta \in (0,1)$ so that $U_{\delta} \neq \emptyset$. Let $\hat{B}$ be a zero extension of $B$ on $\mathbb{R}^d$ and set $B_n = \hat{B} * \eta_{\delta/2n}$ for each $n \geq 1$. Then, $\text{div} B_n \leq 0$ weakly in $U_{\delta/2n}$ for each $n \geq 1$. Let $(c_n)_{n \geq 1}$ and  $(f_n)_{n \geq 1}$ be sequences of functions in $C_0^{\infty}(U)$ such that 
$\lim_{n \rightarrow \infty} c_n =c$ in $L^{2_{*}}(U)$ with $c_n \geq 0$ for all $n \geq 1$ and $\lim_{n \rightarrow \infty} f_n = f$ in $L^{2_*}(U)$. Let $(F_n)_{n \geq1}$ be a sequence of vector field in $C_0^{\infty}(U, \mathbb{R}^d)$ such that
$\lim_{n \rightarrow \infty} F_n = F$ in $L^2(U, \mathbb{R}^d)$. Let $n \geq 1$. Then, by \cite[Theorems 3.2, 4.1]{T73}, there exists $u_n \in H^{1,2}_0(U_{\delta/2n})_b$ such that \eqref{(maineq)} and \eqref{enerestim} hold where
$u$, $B$, $c$, $f$, $F$ and $U$ are replaced by $u_n$, $B_n$, $c_n$, $f_n$, $F_n$ and $U_{\delta/2n}$, respectively. Extend $u_n\in H_0^{1,2}(U_{\delta/2n})$ to $\widetilde{u}_n \in H^{1,2}_0(U)$ by the zero extension. Using the weak compactness of $H^{1,2}_0(U)$, there exist 
$u \in H^{1,2}_0(U)$  and a subsequence of $(\widetilde{u}_n)_{n \geq 1} \subset H^{1,2}_0(U)_b$, say again $(\widetilde{u}_n)_{n \geq 1}$ such that $\lim_{n \rightarrow \infty} \widetilde{u}_n = \widetilde{u}$ weakly in $H^{1,2}_0(U)$, and hence the assertion follows. \\
(iv) The proof is analogous to the one of (iii).\\
(v) Let $\alpha \in (0, \infty)$ and $(f_n)_{n \geq 1} \subset L^{\infty}(U)$ be such that $\lim_{n \rightarrow \infty} f_n =f$ in $L^{2_*}(U)$. By Theorem \ref{theo1.4}(iii), for each $n, m \in \mathbb{N}$ \eqref{(maineq)} holds where $\overline{G}_{\alpha}f$ and $f$ are replaced by $G_{\alpha}(f_n-f_m)$ and $f_n-f_m$, respectively. Hence for each $n, m \in \mathbb{N}$ \eqref{enerestim} holds where $\overline{G}_{\alpha}f$ and $f$ are replaced by $G_{\alpha}(f_n-f_m)$ and $f_n-f_m$, respectively. Since $H^{1,2}_0(U)$ is complete, $G_{\alpha} f \in H^{1,2}_0(U)$ and $\lim_{n \rightarrow \infty} G_{\alpha} f_n  = G_{\alpha} f$ in $H^{1,2}_0(U)$, and hence for any $\varphi \in C_0^{\infty}(U)$ \eqref{resoliden} holds where $\overline{G}_{\alpha} f$ is replaced by $G_{\alpha} f$. Moreover, \eqref{enerestim} is satisfied where $u$ and $F$ are replaced by $G_{\alpha}f$ and $0$, respectively. 
The rest follows from the $L^r$-contraction properties of $(G_{\alpha})_{\alpha>0}$. 
\end{proof}

\noindent The proof of the following is based on the method of Moser's iteration.
\begin{theorem}  \label{boundthm}
Assume {\bf (A)}. Let $f \in L^q(U)$ and $F \in L^{2q}(U, \mathbb{R}^d)$ with $q>\frac{d}{2}$ and $q \geq 2_*$. Then the following hold:
\begin{itemize}
\item[(i)]
Let $u \in H^{1,2}_0(U)_b$ be given. Assume that $u$ is a weak solution to \eqref{undeq}.
Then, \eqref{linfinestim} holds.

\item[(ii)]
Let $w \in H^{1,2}_0(U)_b$ be given. Assume that $w$ is a weak solution to \eqref{undeqdual}. Then, \eqref{linfinestim} holds where $u$ is replaced by $w$.
\end{itemize}
\end{theorem}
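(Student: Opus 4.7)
The plan is to apply Moser's iteration to both weak formulations. I will treat case (i) in detail; part (ii) will follow by the identical argument since the drift in \eqref{(maineqdual)}, tested against $w|w|^{2\beta}$, produces the same nonnegative contribution, namely $\int_U\langle wB,\nabla(w|w|^{2\beta})\rangle\, dx=\tfrac{2\beta+1}{2\beta+2}\int_U\langle B,\nabla(|w|^{2\beta+2})\rangle\, dx\geq 0$ by \eqref{weakdivne}.

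Since $u\in H^{1,2}_0(U)_b$, for every $\beta\geq 0$ the function $\varphi:=u|u|^{2\beta}$ lies in $H^{1,2}_0(U)_b$ with $\nabla\varphi=(2\beta+1)|u|^{2\beta}\nabla u$, so an approximation allows $\varphi$ to be used in \eqref{(maineq)}. I would then invoke \eqref{assump} on the principal term, discard the nonnegative $\int_U(c+\alpha)|u|^{2\beta+2}\, dx$, and absorb the drift via $\int_U\langle B,\nabla u\rangle u|u|^{2\beta}\, dx=(2\beta+2)^{-1}\int_U\langle B,\nabla(|u|^{2\beta+2})\rangle\, dx\geq 0$ (applying \eqref{weakdivne} after approximating the nonnegative $|u|^{2\beta+2}\in H^{1,2}_0(U)_b$ by nonnegative smooth test functions), arriving at
\[
(2\beta+1)\lambda\int_U|u|^{2\beta}\|\nabla u\|^2\, dx \leq \int_U |f|\,|u|^{2\beta+1}\, dx + (2\beta+1)\int_U|u|^{2\beta}\|F\|\,\|\nabla u\|\, dx.
\]
A Young inequality absorbs half of the $F$-integral into the left. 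Writing $p:=2\beta+2$ and $v:=|u|^{p/2}$, so that $\|\nabla v\|_{L^2(U)}^2=(p/2)^2\int_U|u|^{p-2}\|\nabla u\|^2\, dx$, applying Sobolev's embedding $\|v\|_{L^{2^*}(U)}\leq C_S\|\nabla v\|_{L^2(U)}$, and estimating the right side by Hölder's inequality with conjugate exponent $q':=q/(q-1)$, I would obtain an iterative inequality of the form
\[
\|u\|_{L^{p\,2^*/2}(U)}^{p}\leq P(p)\Big(\|f\|_{L^q(U)}\|u\|_{L^{(p-1)q'}(U)}^{p-1}+\|F\|_{L^{2q}(U)}^2\|u\|_{L^{(p-2)q'}(U)}^{p-2}\Big),
\]
where $P(p)$ is polynomial in $p$ and depends only on $\lambda$, $d$, $|U|$.

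The decisive point is that the hypothesis $q>d/2$ gives $\gamma:=2^*(q-1)/(2q)>1$, which makes the iteration productive. I would fix $\gamma_0\in(1,\gamma)$ and a geometric sequence of exponents $(P_k)$ with $P_{k+1}/P_k\to\gamma_0$; at step $k$ the inequality is applied with $p:=2P_{k+1}/2^*$, and the constraint $(p-1)q'\leq P_k$ then holds for all sufficiently large $k$, so Hölder's inequality on the bounded set $U$ reduces the right side to a power of $\|u\|_{L^{P_k}(U)}$, yielding a recursion of the form
\[
\|u\|_{L^{P_{k+1}}(U)}\leq\bigl(P(p)K\bigr)^{1/p}|U|^{O(1/p)}\|u\|_{L^{P_k}(U)}^{1-O(1/p)},\qquad K:=\|f\|_{L^q(U)}+\|F\|_{L^{2q}(U)}^2+1.
\]
Initialization at some $P_0$ is provided by Theorem \ref{theo2.6}(i) together with Sobolev's embedding, using the inclusions $L^q(U)\subset L^{2_*}(U)$ (valid since $q\geq 2_*$ and $|U|<\infty$) and $L^{2q}(U)\subset L^2(U)$. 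The geometric growth of $P_k$ makes the logarithmic series $\sum_k P_k^{-1}\log\bigl(P(p)K\bigr)$ convergent, so iterating the recursion and passing $k\to\infty$ delivers $\|u\|_{L^\infty(U)}\leq C_2\bigl(\|f\|_{L^q(U)}+\|F\|_{L^{2q}(U)}\bigr)$ with $C_2$ depending only on $\lambda,d,q,|U|$. The main obstacle will be the careful bookkeeping of the polynomial-in-$p$ prefactors and the near-unity exponents $1-O(1/p)$ to verify the claimed dependence structure of $C_2$, together with a uniform treatment of the $d=2$ case via the convention $2^*=2_*/(2_*-1)$ from Section \ref{sec2}.
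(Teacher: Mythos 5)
Your overall route is the same as the paper's: Moser iteration on the weak formulation, with the zero-order term discarded by sign, the drift term discarded by writing $\langle B,\nabla u\rangle\varphi$ as a constant multiple of $\langle B,\nabla(\text{nonnegative } H^{1,2}_0\text{-function})\rangle$ and invoking \eqref{weakdivne} after nonnegative smooth approximation, and initialization through Theorem \ref{theo2.6}(i). Your symmetric test function $u|u|^{2\beta}$ even streamlines part (ii), where the paper instead works with $\bar w=w^++k$ and the auxiliary primitive $S(w)$. However, two steps, as you state them, do not go through. First, the ``decisive point'' $\gamma:=2^*(q-1)/(2q)>1$ follows from $q>d/2$ only when $2^*=2d/(d-2)$, i.e.\ for $d\geq 3$. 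For $d=2$ the paper's convention fixes $2^*=2_*/(2_*-1)$ with an arbitrary but fixed $2_*\in(1,2)$, and then $\gamma>1$ is equivalent to $q>2^*/(2^*-2)$, which can fail under the hypotheses: e.g.\ $2_*=1.9$, $q=2$ gives $2^*=19/9$ and $\gamma\approx 0.53<1$, so the iteration with that fixed exponent is not productive. In $d=2$ you must instead use that $H^{1,2}_0(U)\hookrightarrow L^s(U)$ for every finite $s$ and run the scheme with a $q$-dependent exponent; this is exactly why the paper introduces $d_0=1+q$, $s=2d_0/(d_0-2)$ and the interpolation step \eqref{eq3}.

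Second, homogeneity: your recursion carries the prefactor $\bigl(P(p)K\bigr)^{1/p}$ with $K=\|f\|_{L^q(U)}+\|F\|_{L^{2q}(U)}^2+1$ and the exponent $1-O(1/p)$ on $\|u\|_{L^{P_k}(U)}$, so passing to the limit yields a bound of the schematic form $\|u\|_{L^\infty(U)}\leq C\,K^{s}\,\|u\|_{L^{P_0}(U)}^{\tau}$ with $\tau<1$. Combined with the initialization, this behaves like $(\|f\|_{L^q(U)}+\|F\|_{L^{2q}(U)})^{\tau}$ for small data and contains $\|F\|_{L^{2q}(U)}^2$ for large data, which is not the degree-one estimate \eqref{linfinestim}. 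You need to restore homogeneity before iterating: by linearity of \eqref{undeq}, replace $(u,f,F)$ by $(u/\kappa,f/\kappa,F/\kappa)$ with $\kappa=\|f\|_{L^q(U)}+\|F\|_{L^{2q}(U)}$ (treating $\kappa=0$ separately, where testing with $u$ gives $u=0$), prove $\|u/\kappa\|_{L^\infty(U)}\leq C_2$ for normalized data, and scale back. This normalization is precisely what the paper builds into its test functions through the additive constant $k$ in $\bar u=u^++k$ and $\varphi=\bar u^{\beta+1}-k^{\beta+1}$. Both repairs are standard, but without them your argument does not prove the stated estimate, and in $d=2$ it breaks down at the exponent count.
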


\begin{proof}
(i) First assume that $\|f\|_{L^q(U)}+ \|F\|_{L^{2q}(U)}=0$. Then substituting $u$ for $\varphi$, it follows that  $u=0$. Now assume that $\|f\|_{L^q(U)}+ \|F\|_{L^{2q}(U)}>0$. Let $k=\|f\|_{L^q(U)} + \|F\|_{L^{2q}(U)}$ and $\bar{t}=t^++k$, $t \in \mathbb{R}$.
Then, $\bar{u}=u^+ +k \in H_0^{1,2}(U)_b$ with $\nabla \bar{u}  = 1_{\{u>0\}} \,\nabla u$, a.e.  Let $\beta \geq 0$ and choose the test function $\varphi = \bar{u}^{\beta+1}-k^{\beta+1}$.
Then, by the chain rule, $\varphi \in H^{1,2}_0(U)_b$ and $\nabla \varphi = (\beta+1) \bar{u}^{\beta} \nabla \bar{u}$.
Note that
\begin{align}
\hspace{-0.2em} \int_{U } \langle A \nabla u, \nabla \varphi \rangle dx  &= (\beta+1)\int_{U} \bar{u}^{\beta} \langle A \nabla u, \nabla \bar{u} \rangle dx   = (\beta+1)\int_{U} \bar{u}^{\beta} \langle A \nabla u, \nabla {u} \rangle 1_{\{u>0\}} dx \nonumber \\
&= (\beta+1)\int_{U} \bar{u}^{\beta} \langle A \nabla \bar{u}, \nabla \bar{u} \rangle dx    \geq (\beta+1) \lambda \int_U \bar{u}^{\beta} \| \nabla \bar{u} \|^2 dx. \label{ellip}
\end{align}
Let $H(t)=\frac{1}{\beta+2} \left( \bar{t}^{\, \beta+2} - (\beta+2) \bar{t} k^{\beta+1} +(\beta+1) k^{\beta+2} \right)$, $t \in \mathbb{R}$. Then, $H \in C^1(\mathbb{R})$ and $H'(t)=\bar{t}^{\beta+1}-k^{\beta+1}$, $t \in \mathbb{R}$.
Since $H'(t) \geq 0$ for all $t \in \mathbb{R}$ and $H(t)=0$ for all $t \leq 0$, we obtain that $H(t) \geq 0$ for all $t \in \mathbb{R}$. Thus, 
$H(u) \in H_0^{1,2}(U)_b$ with $H(u) \geq 0$ in $U$ and that
$\nabla H(u)=(\bar{u}^{\beta+1}-k^{\beta+1})   \nabla u  = \varphi \nabla u$.
Thus, $\int_{U} \langle B, \nabla u \rangle \varphi dx = \int_{U} \langle B, \nabla H(u) \rangle  dx \geq 0$.
Also, $\int_{U} (c+\alpha) u \varphi dx \geq 0$. 
Now let  $v=\bar{u}^{\frac{\beta+2}{2}}$. Then, $v-k^{\frac{\beta+2}{2}} \in H^{1,2}_0(U)$ with $v-k^{\frac{\beta+2}{2}} \geq 0$ 
and $\nabla v = \frac{\beta+2}{2} \bar{u}^{\frac{\beta}{2}}  \nabla \bar{u}$ by the chain rule. Thus, it follows from \eqref{ellip} that
$\int_{U } \langle A \nabla u, \nabla \varphi \rangle dx  \geq (\beta+1) \lambda \left(2(\beta+1)^{-1} \right)^2 \int_U \|\nabla v \|^2 dx  
\geq 2 \lambda (\beta+2)^{-1}  \int_{U} \| \nabla v \|^2 dx. 
$
By the H\"{o}lder inequality, 
\begin{align*}
\int_{U} f \varphi dx &\leq \int_{U} |f| |\bar{u}|^{\beta+1} dx     \leq   \int_{U}  \left( k^{-1}|f| \right)  \bar{u}^{\beta+2} dx= \int_{U} ( k^{-1} |f|) v^2 dx  \nonumber \\
& \leq \| k^{-1} |f|  \|_{L^q(U)} \|v \|^2_{L^{\frac{2q}{q-1}}(U)} \leq \|v \|^2_{L^{\frac{2q}{q-1}}(U)}. 
\end{align*}
Likewise, by Young's and the H\"{o}lder inequalities,
\begin{align*}
\int_{U} \langle F, \nabla \varphi \rangle dx  &= \int_{U} (\beta+1) \overline{u}^{\beta} \langle F, \nabla \bar{u} \rangle  dx \leq \int_{U}  2k^{-1}\| F\| \cdot |v|  \cdot \|\nabla v\| \,dx \\
&\leq \lambda(\beta+2)^{-1} \int_{U} \| \nabla v \|^2 dx + \lambda^{-1}(\beta+2) \int_{U} k^{-2} \|F\|^2 |v|^2 dx \\
&\leq  \lambda(\beta+2)^{-1}\| \nabla v\|^2_{L^2(U)}+\lambda^{-1}(\beta+2) \| v\|^2_{L^{\frac{2q}{q-1}}(U)}.
\end{align*}
Thus, $\int_{U } \langle A \nabla u, \nabla \varphi \rangle dx \leq \int_{U} f \varphi + \langle F, \nabla \varphi \rangle dx$ implies that
\begin{equation} \label{equbd}
\lambda(\beta+2)^{-1}  \| \nabla v \|^2_{L^2(U)} \leq \big(1+\lambda^{-1}(\beta+2) \big)\|v\|^2_{L^{\frac{2q}{q-1}}(U)}. 
\end{equation}
Now let $d_0 =d$ if $d \geq 3$ and $d_0 =1+q \in (2, 2q)$ if $d=2$. Let $s=\frac{2 d_0}{d_0-2}$.
Using interpolation, triangle and Sobolev's inequalities, for any $\varepsilon>0$ 
\begin{align}
\|v\|_{L^{\frac{2q}{q-1}}(U)} & \leq \varepsilon \|v\|_{L^{s}(U)} +K_1 \varepsilon^{- \frac{d_0}{2q-d_0}} \|v\|_{L^2(U)} \nonumber \\
& \leq \varepsilon \|v-k^{\frac{\beta+2}{2}}\|_{L^{s}(U)} +\varepsilon k^{\frac{\beta+2}{2}}|U|^{1/s} +K_1 \varepsilon^{- \frac{d_0}{2q-d_0}} \|v\|_{L^2(U)}  \nonumber \\
& \leq N_1 \varepsilon  \| \nabla v \|_{L^2(U)}  +(\varepsilon |U|^{\frac{1}{s}-\frac12} +  K_1 \varepsilon^{- \frac{d_0}{2q-d_0}}) \|v\|_{L^2(U)}, \label{eq3}
\end{align}
where $K_1 = \frac{2q-d_0}{2q}\left(  \frac{2q}{d_0} \right)^{-\frac{d_0}{2q-d_0}}$ and
$N_1 =\frac{2(d-1)}{d-2}$ if $d \geq 3$ and $N_1=\frac{1}{2}s |U|^{\frac{1}{s}}$ if $d=2$. By choosing $\varepsilon =\frac{\lambda}{2N_1\sqrt{\lambda+1}} (\beta+2)^{-1}$ in \eqref{eq3} so that $(1+\lambda^{-1}(\beta+2)) \cdot 2N_1^2  \varepsilon^2 \leq \frac{1}{2} \lambda (\beta+2)^{-1} $, we obtain from \eqref{equbd} and \eqref{eq3} that
$\|\nabla v \|^2_{L^2(U)} \leq K_2(\beta+2)^{\theta} \| v\|^2_{L^2(U)}$,
where $\theta >1$ is a constant which only depends on $d$ and $q$ and $K_2>0$ is a constant which only depends on $d$, $q$, $\lambda$ and $|U|$. Thus, using Sobolev's inequality,
$$
\| v-k^{\frac{\beta+2}{2}} \|_{L^{2^*}(U)} \leq N \|\nabla (v-k^{\frac{\beta+2}{2}}) \|_{L^2(U)} \leq N K_2 ^{1/2}(\beta+2)^{\theta/2} \| v\|_{L^2(U)},
$$
where $N$ is the constant as in Corollary \ref{stabil}. Thus, by the triangle inequality
\begin{align*}
\|v\|_{L^{2^*}(U)} &\leq k^{\frac{\beta+2}{2}}|U|^{1/2^*}+ N K_2 ^{1/2}(\beta+2)^{\theta/2} \| v\|_{L^2(U)} \leq K_3  (\beta+2)^{\theta/2}\|v\|_{L^2(U)},
\end{align*}
where $K_3:=(|U|^{\frac{1}{2^*}-\frac12}+NK_2^{1/2})$. Thus,
\begin{equation} \label{iterat}
\|v^{\sigma}\|^{2/\sigma}_{L^2(U)}=\|v\|^2_{L^{2^*}(U)} \leq K^2_3 (\beta+2)^{\theta} \|v\|^2_{L^2(U)},
\end{equation}
where $\sigma=\frac{d}{d-2}$ if $d \geq 3$ and $\sigma=2^*/2$ if $d=2$. Now write $\gamma=\frac{\beta+2}{2} \geq 1$. Then, $v=\bar{u}^\gamma$ and \eqref{iterat} is rewritten as
\begin{equation} \label{aronestim}
\|\bar{u}^{\gamma \sigma} \|_{L^2(U)}^{2/ \sigma} \leq K_3^2 (2 \gamma)^{\theta} \|\bar{u}^{\gamma}\|^2_{L^2(U)}.
\end{equation}
Now for $m=0,1,2. \ldots, $ define  $\psi_m:=\|\bar{u}^{\sigma^m} \|_{L^2(U)}^{2/ \sigma^m}$.
Then, \eqref{aronestim} with $\gamma=\sigma^m$ implies that
$\psi_{m+1} \leq \Big( K_3^2 (2 \sigma^m)^{\theta}  \Big)^{1/\sigma^m} \psi_m =K_4^{m/\sigma^m} \psi_m$,
where $K_4 = (K_3^2 2^{\theta}+1) \sigma^{\theta}$. Thus,
$\psi_{m+1} \leq K_5 \psi_0$, where $K_5=K_4^{\sum_{j=0}^\infty j/ \sigma^j}$.
Therefore, using Theorem \ref{theo2.6}(i)
\begin{align} \label{final}
\| u^+ \|_{L^{\infty}(U)}&\leq \limsup_{m \rightarrow \infty} \psi_{m+1}^{1/2} \leq K_5^{1/2} \| \bar{u}\|_{L^2(U)}  \leq K_5^{1/2} (\|u\|_{L^2(U)}+k|U|^{1/2}),  \nonumber \\
&\leq K_5^{1/2} ( \|u\|_{H_0^{1,2}(U)}+k|U|^{1/2}) \leq K_6(\|f\|_{L^q(U)} + \|F\|_{L^{2q}(U)}),
\end{align}
where $K_6=C_1K_5^{1/2}(|U|^{\frac{1}{2_*}-\frac{1}{q}}+|U|^{\frac{1}{2}-\frac{1}{2q}} )+K_5^{1/2}|U|^{1/2}$ and $C_1>0$ is the constant as in Theorem \ref{theo2.6}(i). Replacing $u$ by $-u$ in \eqref{final}, \eqref{linfinestim} follows. \\
(ii) Choose $\varphi = \bar{w}^{\beta+1}-k^{\beta+1}$. Thus, it suffices to show that 
\begin{equation} \label{posidiv}
\int_{U} \langle B, w \nabla (\bar{w}^{\beta+1}-k^{\beta+1}) \rangle dx =(\beta+1) \int_{U} \langle B, (\bar{w}^{\beta+1}-k\bar{w}^\beta) \nabla \bar{w} \rangle dx \geq 0,
\end{equation}
where $\bar{w} = w^{+}+k$. Let $S(w):= \frac{1}{\beta+2} \bar{w}^{\beta+2} - \frac{1}{\beta+1}k\bar{w}^{\beta+1}+\frac{k^{\beta+2}}{(\beta+2)(\beta+1)} $. Then, similarly to the proof of (i), it holds that $S(w) \in H^{1,2}_0(U)_b$ with $S(w) \geq 0$ and that $\nabla S(w) = (\bar{w}^{\beta+1}-k\bar{w}^\beta) \nabla \bar{w}$. Thus, \eqref{posidiv} is shown. The rest is identical to the proof of (i).
\end{proof}

\begin{theorem} \label{theo3.2}
Assume {\bf (A)} and let $q>\frac{d}{2}$ with $q \geq 2_*$. Assume that $f \in L^{q}(U)$ and $F \in L^{2q}(U, \mathbb{R}^d)$. Then, the following hold:
\begin{itemize}
\item[(i)]
There exists $u \in H^{1,2}_0(U)_b$ such that $u$ is a weak solution to \eqref{undeq}. 
Moreover, \eqref{enerestim} and \eqref{linfinestim} are satisfied.

\item[(ii)]
There exists $w \in H^{1,2}_0(U)_b$ such that $w$ is a weak solution to \eqref{undeqdual}. Moreover, \eqref{enerestim} and \eqref{linfinestim} are satisfied where $u$ is replaced by $w$.
\end{itemize}
\end{theorem}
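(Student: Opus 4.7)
The plan is as follows. Since the energy estimate \eqref{enerestim} for both (i) and (ii) is already delivered by Theorem \ref{theo2.6}(iii)--(iv), the real task is to exhibit weak solutions that are bounded; once this is known, Theorem \ref{boundthm}(i)--(ii) applied to them yields \eqref{linfinestim} directly.

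To produce a bounded solution I would rerun the approximation scheme from the proof of Theorem \ref{theo2.6}(iii) with one strengthened requirement: choose the smooth approximating data $(f_n)_{n \geq 1} \subset C_0^\infty(U)$ and $(F_n)_{n \geq 1} \subset C_0^\infty(U, \mathbb{R}^d)$ so that convergence holds not only in $L^{2_*}(U)$ and $L^2(U, \mathbb{R}^d)$ but simultaneously in $L^q(U)$ and $L^{2q}(U, \mathbb{R}^d)$ respectively; a standard mollification-and-truncation procedure achieves this. Each resulting $u_n \in H^{1,2}_0(U_{\delta/2n})_b$ provided by \cite[Theorems 3.2, 4.1]{T73} solves an approximate equation whose coefficients satisfy \eqref{assump} with the same $\lambda$, together with $\text{div}\, B_n \leq 0$ on $U_{\delta/2n}$ and $c_n \geq 0$. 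Hence Theorem \ref{boundthm}(i) applies to each $u_n$ and yields
\begin{equation*}
\|u_n\|_{L^\infty(U_{\delta/2n})} \leq C_2 \bigl( \|f_n\|_{L^q(U)} + \|F_n\|_{L^{2q}(U)} \bigr),
\end{equation*}
where $C_2$ depends only on $\lambda$, $d$, $q$, $|U|$ and in particular is independent of $n$.

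The zero extensions $(\widetilde{u}_n)_{n \geq 1} \subset H^{1,2}_0(U)_b$ are thus uniformly bounded in $L^\infty(U)$. By the argument of Theorem \ref{theo2.6}(iii) a subsequence converges weakly in $H^{1,2}_0(U)$ to a weak solution $u$ of \eqref{undeq}, and Rellich--Kondrachov permits extraction of a further a.e.\ convergent subsequence; lower semicontinuity of the $L^\infty$-norm under a.e.\ convergence then places $u$ in $H^{1,2}_0(U)_b$. Theorem \ref{boundthm}(i) applied to $u$ itself now delivers \eqref{linfinestim}, while \eqref{enerestim} is preserved from Theorem \ref{theo2.6}(iii). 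Part (ii) follows by the symmetric argument using Theorem \ref{theo2.6}(iv) and Theorem \ref{boundthm}(ii) for the dual problem.

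The main technical point requiring care is the verification that the constant $C_2$ in Theorem \ref{boundthm}(i) genuinely depends only on $\lambda$, $d$, $q$, $|U|$ and absorbs no norm of $B_n$ or $c_n$. Inspection of the Moser-iteration proof confirms this: the sign conditions $\text{div}\, B_n \leq 0$ and $c_n \geq 0$ are invoked exclusively to discard nonnegative terms of the form $\int_U \langle B_n, \nabla H(u_n)\rangle\,dx$ and $\int_U c_n u_n \varphi\,dx$, so the iteration constants $K_2, K_3, K_5, K_6$ depend only on $\lambda$, $d$, $q$ and $|U|$, making the uniform-in-$n$ bound above legitimate.
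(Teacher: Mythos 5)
Your proposal is correct and follows essentially the same route as the paper: rerun the approximation scheme of Theorem \ref{theo2.6}(iii) with data converging in the stronger $L^q$/$L^{2q}$ norms, apply the Moser-iteration bound of Theorem \ref{boundthm} to each approximant with a constant independent of $n$, and pass to the (weak and a.e.) limit to obtain a bounded weak solution satisfying \eqref{enerestim} and \eqref{linfinestim}, with (ii) handled symmetrically. Your explicit remarks on the $n$-independence of $C_2$ and the a.e.\ convergent subsequence only spell out steps the paper leaves implicit.
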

\begin{proof}
(i)  As in the proof of Theorem \ref{theo2.6}(iii), there exist $u \in H^{1,2}_0(U)$ and a sequence of functions $(u_n)_{n \geq 1}$ in $H^{1,2}_0(U)_b$ such that $\lim_{n \rightarrow \infty} u_n = u$ weakly in $H^{1,2}_0(U)$ and $\lim_{n \rightarrow \infty} u_n = u$ a.e. and that \eqref{(maineq)} and \eqref{enerestim} hold.  By Theorem \ref{boundthm}, \eqref{linfinestim} holds where $u$ is replaced by $u_n$ for each $n \geq 1$. Thus, $u \in H^{1,2}_0(U)_b$ and \eqref{linfinestim} follows.\\
(ii) Analogously to (i), the assertion for $w$ follows.
\end{proof}

\section{Proofs of the main results}\label{sec5}
\noindent The main idea for the proof of the uniqueness in 
Theorem \ref{theomain}(i) stems from \cite[Lemma 2.2.11]{K07} and \cite[Theorem 4.7(i)]{KT20} where the existence of a bounded weak solution to a dual problem is crucially used.

\centerline{}
\noindent
{\bf Proof of Theorem \ref{theomain}.}\\ 
(i)
Existence of a weak solution $u$ to \eqref{undeq} satisfying \eqref{enerestim} follows from Theorem \ref{theo2.6}(iii). And the existence of a bounded weak solution $u$ to \eqref{undeq} satisfying \eqref{linfinestim} follows from Theorem \ref{theo3.2}(i). In order to show the uniqueness of weak solutions to \eqref{undeq}, by linearity it is enough to show  the following claim:\\[5pt]
{\bf Claim}: Assume {\bf (A)}. Let $u \in H^{1,2}_0(U)$ satisfy that for any $\varphi \in C_0^{\infty}(U)$
\begin{equation}  \label{4.1}
\int_{U} \langle A \nabla u, \nabla \varphi \rangle   + \big( \langle B, \nabla u \rangle  + (c+\alpha)u\big) \varphi dx = 0.
\end{equation}
Then, $u=0$. \\ \\
To show the claim, let $\psi \in C_0^{\infty}(U)$ be arbitrarily fixed. By Theorem \ref{theo3.2}(ii) and using an approximation of $H^{1,2}_0(U)$ by $C_0^{\infty}(U)$, there exists $w \in H^{1,2}_0(U)_b$ such that for any $\varphi \in H^{1,2}_0(U)$
\begin{equation} \label{4.3}
\int_{U} \langle A^T \nabla w+ w B, \nabla \varphi \rangle  +  (c+\alpha) w \varphi dx = \int_{U} \psi \varphi dx.
\end{equation}
Replacing $\varphi$ by $u$ in \eqref{4.3}, we have
\begin{equation} \label{4.4}
\int_{U} \langle A^T \nabla w+ w B, \nabla u \rangle + (c+\alpha) w u dx = \int_{U} \psi u dx.
\end{equation}
Note that using an approximation, \eqref{4.1} holds for any $\varphi \in H^{1,2}_0(U)_b$. Thus, replacing $\varphi$ by $w$
\begin{equation} \label{4.5}
\int_{U} \langle A \nabla u, \nabla w \rangle+ \big(\langle B, \nabla u \rangle  + (c+\alpha)u \big) w dx = 0.
\end{equation}
Subtracting \eqref{4.5} from \eqref{4.4}, we have $\int_{U} \psi u dx =0$, and hence $u=0$. \\
(ii) By Theorem \ref{theo2.6}(v), $G_{\alpha} f \in  H^{1,2}_0(U) \cap  L^r(U)$ such that
\begin{equation*}
\int_{U} \langle A \nabla G_{\alpha}f, \nabla \varphi \rangle+  \langle B, \nabla  G_{\alpha} f \rangle \varphi + (c+\alpha) G_{\alpha}f  \cdot \varphi \,dx =  \int_{U}  f \varphi \, dx, \quad \text{for all $\varphi \in C_0^{\infty}(U)$}.
\end{equation*}
and that $\| G_{\alpha} f \|_{L^r(U)} \leq \alpha^{-1} \|f\|_{L^r(U)}$. Let $\hat{w}=u-G_{\alpha} f$. Then, by linearity $\hat{w} \in H^{1,2}_0(U)$ and it holds
\begin{equation*} \label{(fzerote)}
\int_{U} \langle A \nabla \hat{w}, \nabla \varphi \rangle+ \big( \langle B,  \nabla \hat{w} \rangle +(c+\alpha) \hat{w} \big)\varphi dx   = \int_{U} \langle F, \nabla \varphi \rangle dx, \quad \text{for all $\varphi \in C_0^{\infty}(U)$}. 
\end{equation*}
Thus, by Theorem \ref{theomain}(i), $\hat{w} \in L^{\infty}(U)$ and
there exists the constant $C_2>0$ as in (i) such that
\begin{equation*} \label{infestimze}
\|\hat{w}\|_{L^{\infty}(U)} \leq C_2 \| F\|_{L^{2q}(U)}. 
\end{equation*}
Therefore, $u = G_{\alpha} f +\hat{w} \in L^r(U)$ and
$$
\|u\|_{L^r(U)} \leq \|G_{\alpha} f\|_{L^r(U)} +\|\hat{w}\|_{L^r(U)} \leq \alpha^{-1}\|f\|_{L^r(U)} +|U|^{1/r} C_2 \| F\|_{L^{2q}(U)},
$$
as desired. \qed
\centerline{}
\centerline{}
In order to derive our $L^1$-stability result, the following extended version of the $L^1$-contraction property is needed. Since the integrability of $g$ and $G$ in the following theorem is lower than the one of $f$ and $F$ in Theorem \ref{theomain}, we will show below through a suitable approximation and a duality argument.

\begin{theorem} \label{theo4.2}
Assume {\bf (A)}.
Let $\alpha \in (0, \infty)$, $g \in L^{1}(U)$, $G \in L^2(U, \mathbb{R}^d)$ and $v \in H^{1,2}_0(U)$. Assume that $v$ is a weak solution to \eqref{undeq} where $f$ and $F$ are replaced by $g$ and $G$, respectively. Then, 
\begin{align} \label{l1estim}
\| v \|_{L^1(U)} \leq \alpha^{-1} \| g\|_{L^1(U)} +|U|^{1/2}C_1\|G\|_{L^2(U)},
\end{align}
where $C_1>0$ is the constant as in Theorem \ref{theo2.6}(i).
\end{theorem}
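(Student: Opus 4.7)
The plan is to split the solution $v = v_1 + v_2$, where $v_2 \in H^{1,2}_0(U)$ is the unique weak solution (existence by Theorem \ref{theo2.6}(iii), uniqueness by Theorem \ref{theomain}(i)) to \eqref{undeq} with $f = 0$ and $F = G$, so that $v_1 := v - v_2 \in H^{1,2}_0(U)$ is a weak solution with $f = g$ and $F = 0$. The $v_2$-piece is handled directly: Theorem \ref{theo2.6}(i) gives $\|v_2\|_{H^{1,2}_0(U)} \leq C_1 \|G\|_{L^2(U)}$, and H\"{o}lder's inequality on the bounded domain yields $\|v_2\|_{L^1(U)} \leq |U|^{1/2} \|v_2\|_{L^2(U)} \leq |U|^{1/2} C_1 \|G\|_{L^2(U)}$, which is exactly the second term in \eqref{l1estim}. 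It therefore remains to prove $\|v_1\|_{L^1(U)} \leq \alpha^{-1} \|g\|_{L^1(U)}$.

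Since $g$ is only in $L^1(U)$, I will approximate it by truncations $g_n := (-n) \vee (g \wedge n) \in L^{\infty}(U) \subset L^{2_*}(U)$; these satisfy $g_n \to g$ in $L^1(U)$ by dominated convergence and $\|g_n\|_{L^1(U)} \leq \|g\|_{L^1(U)}$. For each $n$, Theorem \ref{theomain}(i) produces a unique $v_{1,n} \in H^{1,2}_0(U)$ weakly solving \eqref{undeq} with source $g_n$ and $F = 0$, and Theorem \ref{theomain}(ii) applied with $r = 1$ (the $F$-contribution vanishes) gives the uniform bound $\|v_{1,n}\|_{L^1(U)} \leq \alpha^{-1}\|g\|_{L^1(U)}$ and, when applied to $v_{1,n} - v_{1,m}$ (which solves with source $g_n - g_m \in L^{2_*}(U)$ and $F = 0$), the Cauchy estimate $\|v_{1,n} - v_{1,m}\|_{L^1(U)} \leq \alpha^{-1}\|g_n - g_m\|_{L^1(U)} \to 0$. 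Let $\tilde{v}_1 \in L^1(U)$ be the resulting $L^1$-limit; then $\|\tilde{v}_1\|_{L^1(U)} \leq \alpha^{-1}\|g\|_{L^1(U)}$.

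The crux is to identify $\tilde{v}_1$ with $v_1$, which I propose to do via a duality argument. Fix $\varphi \in C_0^{\infty}(U)$. Applying Theorem \ref{theo3.2}(ii) with $A$ replaced by $A^T$ (assumption {\bf (A)} is invariant under transposition of $A$), obtain $w_\varphi \in H^{1,2}_0(U)_b$ satisfying
\begin{equation*}
\int_{U} \langle A^T \nabla w_\varphi + w_\varphi B, \nabla \phi \rangle + (c + \alpha) w_\varphi \phi\, dx = \int_{U} \varphi \phi\, dx
\end{equation*}
for every $\phi \in C_0^{\infty}(U)$. Since $v_1 - v_{1,n} \in H^{1,2}_0(U)$ weakly solves \eqref{undeq} with source $g - g_n \in L^1(U)$ and $F = 0$, testing its equation against $w_\varphi$ and testing the $w_\varphi$-equation against $v_1 - v_{1,n}$ (extending from $C_0^\infty$-tests via approximation with uniform $L^\infty$-truncations, as in the proof of Lemma \ref{lem1.1}(ii)) and using $\int_{U} \langle A^T \nabla w_\varphi, \nabla(v_1 - v_{1,n}) \rangle\, dx = \int_{U} \langle A \nabla(v_1 - v_{1,n}), \nabla w_\varphi \rangle\, dx$, the two bilinear forms coincide, producing $\int_{U} \varphi(v_1 - v_{1,n})\, dx = \int_{U} (g - g_n) w_\varphi\, dx$. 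Since $w_\varphi \in L^\infty(U)$ and $g_n \to g$ in $L^1(U)$, the right-hand side vanishes in the limit, so $\int_{U} \varphi v_{1,n}\, dx \to \int_{U} \varphi v_1\, dx$. Combined with $v_{1,n} \to \tilde{v}_1$ in $L^1(U)$, this forces $v_1 = \tilde{v}_1$ a.e., and combining with the $v_2$-bound concludes.

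The main obstacle will be rigorously justifying the mutual test-function identity in the duality step: neither $v_1 - v_{1,n}$ nor $w_\varphi$ is a priori smooth, and $v_1 - v_{1,n}$ need not lie in $L^\infty(U)$. The standard remedy is to approximate $w_\varphi$ by $C_0^{\infty}(U)$-functions converging in $H^{1,2}_0(U)$ with uniform $L^\infty$-control (again via chain-rule truncation as in Lemma \ref{lem1.1}(ii)) and pass to the limit term by term using dominated convergence, where $B \in L^2(U,\mathbb{R}^d)$, $c \in L^{2_*}(U)$, and the Sobolev embedding $H^{1,2}_0(U) \hookrightarrow L^{2^*}(U)$ ensure all integrals are absolutely convergent.
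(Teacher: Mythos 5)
Your proposal is correct and follows essentially the same route as the paper's proof: the $F$-part is controlled by the energy estimate \eqref{enerestim} plus H\"older on the bounded domain, the $f$-part by the $L^1$-contraction (Theorem \ref{theomain}(ii), i.e.\ the resolvent) after truncating $g$, and the $L^1$-limit is identified with the actual solution by the same duality argument using a bounded solution of the transposed dual problem from Theorem \ref{theo3.2}(ii) and mutual testing. The only difference is organizational: you split $v=v_1+v_2$ at the outset and approximate only $g$, whereas the paper first proves the estimate for data in $L^{q}(U)\times L^{2q}(U,\mathbb{R}^d)$ and then approximates both $g$ and $G$ simultaneously before identifying the limit; this changes nothing essential.
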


\begin{proof}
Let $q>d/2$ with $q \geq 2_*$, $\hat{g} \in L^{q}(U)$ and $\hat{G} \in L^{2q}(U, \mathbb{R}^d)$. \\
We first claim that there exists $\hat{v} \in H^{1,2}_0(U)_b$ such that $\hat{v}$ is a weak solution to \eqref{undeq} where $f$ and $F$ are replaced by $\hat{g}$  and $\hat{G}$, respectively and that
\begin{align*} 
\| \hat{v} \|_{L^1(U)} \leq \alpha^{-1} \| \hat{g}\|_{L^1(U)} +|U|^{1/2}C_1\|\hat{G}\|_{L^2(U)}.
\end{align*}
Indeed, the existence and uniqueness of weak solution $\hat{v} \in H^{1,2}_0(U)_b$ follow by Theorem \ref{theomain}(i). By Theorems 
\ref{theomain} and \ref{theo2.6}(v),  we obtain that $G_{\alpha} \hat{g} \in H^{1,2}_0(U)_b$ and  $G_{\alpha} \hat{g}$ is a unique weak solution to \eqref{undeq} where $f$ and $F$ are replaced by $\hat{g}$ and $0$, and it holds that
$$
\|G_{\alpha} \hat{g}\|_{L^1(U)}   \leq \alpha^{-1} \| \hat{g} \|_{L^1(U)}.
$$  
Let $\hat{w} = \hat{v}- G_{\alpha} \hat{g}$. Then, by linearity $\hat{w} \in H^{1,2}_0(U)_b$ and $\hat{w}$ is a unique weak solution to \eqref{undeq} where $f$ and $F$ are replaced by $0$ and $\hat{G}$, respectively. Hence it follows from \eqref{enerestim} in Theorem \ref{theo2.6}(i) that  
$$
\|\hat{w}\|_{L^1(U)} \leq |U|^{1/2} \|\hat{w}\|_{H_0^{1,2}(U)} \leq |U|^{1/2}C_1\|\hat{G}\|_{L^2(U)},
$$
where $C_1>0$ is the constant as in \eqref{enerestim}. Since 
$$
\|\hat{v}\|_{L^1(U)}   \leq \|G_{\alpha} \hat{g} \|_{L^1(U)} +\|\hat{w}\|_{L^1(U)}
\leq \alpha^{-1} \| \hat{g}\|_{L^1(U)} +|U|^{1/2} C_1 \|\hat{G}\|_{L^2(U)},
$$
 the claim follows. \\
Now let $(g_n)_{n \geq 1}$ and $(G_n)_{n \geq 1}$ be sequences of functions and vector fields in $L^{\infty}(U)$ and $L^{\infty}(U, \mathbb{R}^d)$, respectively, such that $\lim_{n \rightarrow \infty} g_n = g$ in $L^1(U)$ and $\lim_{n \rightarrow \infty}G_n =G$ in $L^2(U, \mathbb{R}^d)$. 
By Theorem \ref{theomain}(i), for each $n \geq 1$ there exists $\widetilde{v}_n \in H^{1,2}_0(U)_b$ such that $\widetilde{v}_n$ is a unique weak solution to \eqref{undeq}, where $f$ and $F$ are replaced by $g_n$ and $G_n$, respectively. 
Moreover, the claim yields that for each $n, m \geq 1$
\begin{equation} \label{complel1}
\|\widetilde{v}_n -\widetilde{v}_m \|_{L^1(U)} \leq \alpha^{-1} \|g_n-g_m \|_{L^1(U)}  + |U|^{1/2}C_1\| G_n- G_m \|_{L^2(U)}
\end{equation}
and that
\begin{align*} 
\| \widetilde{v}_n \|_{L^1(U)} \leq \alpha^{-1} \|g_n\|_{L^1(U)} +|U|^{1/2}C_1\|G_n\|_{L^2(U)}.
\end{align*}
Using \eqref{complel1} and the completeness of $L^1(U)$, there exists $\widetilde{v} \in L^1(U)$ such that $\lim_{n \rightarrow \infty} \widetilde{v}_n = \widetilde{v}$ in $L^1(U)$, so that \eqref{l1estim} holds where $v$ is replaced by $\widetilde{v}$. To complete our assertion, we will show that $v = \widetilde{v}$ by using a duality argument. Let $\psi \in C_0^{\infty}(U)$ be arbitrarily fixed. By Theorem \ref{theo3.2}(ii), there exists $w \in H^{1,2}_0(U)_b$ such that 
\begin{align} \label{firstvarid}
&\int_{U} \langle A^T \nabla w+ w B, \nabla (v-\widetilde{v}_n) \rangle  +(c+\alpha) w (v-\widetilde{v}_n) dx = \int_{U} \psi (v-\widetilde{v}_n) dx.
\end{align}
Since $v-\widetilde{v}_n$ is a weak solution to \eqref{undeq} where $f$ and $F$ are replaced by $g-g_n$ and $G-G_n$, respectively,  we obtain that
\begin{align}
&\int_{U} \langle A \nabla (v-\widetilde{v}_n), \nabla w \rangle + \big(\langle B, \nabla (v-\widetilde{v}_n) \rangle + (c+\alpha) (v-\widetilde{v}_n) \big)w dx  \nonumber \\
&\quad = \int_{U} (g-g_n) w+ \langle G-G_n, \nabla w\rangle  dx. \label{secondvarid}
\end{align}
Subtracting \eqref{secondvarid} from \eqref{firstvarid}, 
$
\int_{U} \psi (v-\widetilde{v}_n) dx = \int_{U} (g-g_n) w + \langle (G-G_n), \nabla w\rangle  dx.
$
Passing to the limit $n \rightarrow \infty$, we get $\int_{U} \psi(v-\widetilde{v}) dx = 0$, and hence $v=\widetilde{v}$ as desired.
\end{proof}
\text{} \\
{\bf Proof of Corollary \ref{stabil}.}\;\\
Note that for each $n \geq 1$ and $\varphi \in C_0^{\infty}(U)$
\begin{align*}
&\int_{U} \langle A_n \nabla (u-u_n), \nabla \varphi \rangle +\big( \langle B, \nabla (u-u_n) \rangle+ (c+\alpha) (u-u_n) \big)\, \varphi dx \\
& =  \int_{U} \big( \langle B_n -B, \nabla u_n   \rangle + (c_n-c)u_n +f-f_n\big) \varphi + \langle (A_n-A) \nabla u + F-F_n, \nabla \varphi \rangle dx.
\end{align*}
Using Theorem \ref{theo4.2}, 
we obtain that
\begin{align*}
& \|u_n-u\|_{L^1(U)} \leq \alpha^{-1} \big\| \langle B -B_n, \nabla u_n   \rangle + (c-c_n)u_n   \big\|_{L^1(U)} \\
& \qquad + C_3 \|(A_n-A) \nabla u \|_{L^2(U)} + \alpha^{-1} \|f-f_n\|_{L^1(U)} + C_3 \|F-F_n\|_{L^2(U)},
\end{align*}
where $C_3=|U|^{1/2} C_1$ and $C_1>0$ is the constant as in Theorem \ref{theomain}(i). Hence, \eqref{stabilest} is established by H\"{o}lder's inequality, Sobolev's inequality and Theorem \ref{theo2.6}(i). The rest follows from Lebesgue's theorem. \qed

\centerline{}

\noindent
{\bf Acknowledgment.} The author would like to express his sincere gratitude to the anonymous referee for giving valuable comments and suggestions to improve the paper.

\centerline{}

\bibliographystyle{amsplain}

\end{document}